\DeclareMathAlphabet{\mathpzc}{OT1}{pzc}{m}{it}
\newcommand{\duline}[1]{{\bgroup \markoverwith{{\bgroup \markoverwith{\rule[-1.2pt]{0.1pt}{0.4pt}}\ULon {\rule[-2.8pt]{1pt}{0.4pt}}}}\ULon {#1}}} 
\newcommand{\dulineF}[1]{{\bgroup \markoverwith{{\bgroup \markoverwith{\rule[-1.2pt]{0.4pt}{0.4pt}}\ULon {\rule[-2.8pt]{2pt}{0.4pt}}}}\ULon {#1}}} 
\newcommand{\dulineeta}[1]{{\bgroup \markoverwith{{\bgroup \markoverwith{\rule[0.2pt]{0.1pt}{0.4pt}}\ULon {\rule[-1.4pt]{1pt}{0.4pt}}}}\ULon {#1}}} 
\newcommand{\real}{\mathbb{R}} 
\newcommand{\rest}[1]{\,\rule[-.17cm]{.012cm}{.3cm}_{\,#1}} 
\renewcommand{\setminus}{\,\backslash\,}
\newcommand{\cptf}{\mathcal{C}^\infty_c} 
\renewcommand{\index}{\textup{Index}} 
\newcommand{\snabla}{{}\nabla^\Sigma} 
\newcommand{\rnabla}{\nabla^{\circ}} 
\newcommand{\meanvec}{\vec{H}} 
\newcommand{\meancurv}{H} 
\newcommand{\trace}{\textup{tr}} 
\newcommand{\divergence}{\textup{div}} 
\newcommand{\dvol}{\textup{dvol}} 
\newcommand{\normalvec}{\vec{N}} 
\newcommand{\gauss}{e^{-\frac{|x|^2}{4}}} 
\newcommand{\mgauss}{e^{\frac{|x|^2}{4}}}
\newcommand{\slaplace}{\Delta^{\Sigma}} 
\newcommand{\dgauss}{\textup{d}\mu} 
\newcommand{\wslaplace}{\Delta_\mu^\Sigma} 
\newcommand{\wsjacobi}{L^\Sigma_\mu} 
\newcommand{\wltwo}{{L^2_\mu}} 
\newcommand{\wltwoforms}{\wltwo\Omega} 
\newcommand{\hodged}{d^*_\mu} 
\newcommand{\ghf}{\mathcal{H}} 
\newcommand{\whodge}{\star_{\mu}} 
\theoremstyle{plain}
\newtheorem{theorem}{Theorem}[section]
\newtheorem{proposition}[theorem]{Proposition}
\newtheorem{lemma}[theorem]{Lemma}
\newtheorem{corollary}[theorem]{Corollary}
\newtheorem*{theorem*}{Theorem}
\newenvironment{maintheorem}[1]
  {\innercustomthm}
  {\endinnercustomthm}
\newtheorem{claim}{\texttt{Claim}}
\theoremstyle{definition}
\newtheorem{definition}[theorem]{Definition}
\theoremstyle{remark}
\newtheorem{remark}{\underline{Remark}}
\numberwithin{equation}{theorem}
\begin{document}

\title{Index Estimate of Self-Shrinkers in $\real^3$ with Asymptotically Conical Ends}
\author{Nicolau Sarquis Aiex
}
\maketitle

\renewcommand{\abstractname}{\vspace{-\baselineskip}}
\begin{abstract}
\noindent \textsc{Abstract.} We construct Gaussian Harmonic forms of finite Gaussian weighted $L^2$-norm on non-compact surfaces that detect each asymptotically conical end. As an application we prove an extension of the index estimates of self-shrinkers in \cite{mcgonagle2015} under the existence of such ends. We show that the Morse index of a self-shrinker is greater or equal to $\frac{2g+r-1}{3}$, where $r$ is the number of asymptotically conical ends.
\end{abstract}


\section{Introduction}
A surface $\Sigma$ in $\real^3$ is called a self-shrinker if each time slice of its mean curvature flow is a re-scaling of itself.
Self-shrinkers are models for the singularities of the mean curvature flow of an arbitrary surface, as shown by Huisken \cite{huisken1990} and Ilmanen-White.
It is a long standing problem to classify self-shrinkers in order to better understand the flow.

Self-shrinkers may be defined as critical points of the Gaussian weighted area in $\real^3$.
In this work we will be interested in studying its Morse index when the surfaces have conical structure at each end.
It was shown by L. Wang in \cite[Theorem 1.1]{wang_lu2014} that the conical asymptote uniquely defines the whole surface.

There are two well known conjectures \cite{ilmanen-unpublished} (see also \cite[1.2;1.3]{wang_lu2014}) on the rigidity of complete embedded non-compact self-shrinkers.

\noindent \textbf{Conjecture 1:} Each end of a self-shrinker is either asymptotically conical or asymptotically cylindrical.

\noindent \textbf{Conjecture 2:} The only self-shrinker with asymptotically cylindrical end is $S^1(\sqrt{2})\times\real$ (up to isometry).

In view of these conjectures and Wang's uniqueness theorem, our assumption of asymptotically conical ends seems to not be very restrictive.
In fact, if both conjectures are proven to be true, our estimates would be valid for all self-shrinking surfaces.

Let us mention that Q. Ding-L. Xin \cite[1.1]{ding-xin2013} proved that a self-shrinker of arbitrary codimension in any euclidean space has polynomial volume growth, so we no longer need that assumption.

Our index estimates are obtained following the ideas of M. Mcgonagle \cite{mcgonagle2015}, in which the he uses closed forms with finite Gaussian weighted norm to construct Gaussian Harmonic $1$-forms.
The coordinates of these forms are then used as test-functions on the second variation of the Gaussian weighted area of the self-shrinker.
Mcgonagle proved that $\index(\Sigma)\geq\frac{g}{3}$ where $g$ is the genus.
These ideas are similar to \cite{ros2006} and have been exploited with much success in the study of minimal surfaces.

The main result in this article is the construction of closed forms with finite Gaussian norm that detect the asymptotically conical ends.
We show that the space of Gaussian Harmonic $1$-forms has at least $2g+r-1$ linearly independent $1$-forms, where $r$ is the number of asymptotically conical ends.
More specifically:
\begin{maintheorem}{\ref{dimension}}
If $\Sigma$ is a complete properly immersed surface in $\real^3$ of genus $g$ and $r$ asymptotically conical ends, then $dim(\wltwoforms^1\cap\ghf^1)\geq 2g+r-1$.
\end{maintheorem}
That way we are able to mildly extend Mcgonagle's result so to include the number of ends as well as the genus on the index estimates.
We prove the following:
\begin{maintheorem}{\ref{indexestimate}}
Let $\Sigma$ be a properly immersed orientable self-shrinking surface in $\real^3$ with genus $g$ and $r$ asymptotically conical ends.
If the principal curvatures satisfy $|\kappa_1^2-\kappa_2^2|\leq c <1$ then, 
\begin{equation*}
\index(\Sigma)\geq\frac{2g+r-1}{3}.
\end{equation*}
\end{maintheorem}
This result may be compared to O.Chodosh-D.Maximo \cite{chodosh-maximo2016} extension of Ros' result mentioned above.

This paper is divided as follows. In section $2$ we provided the necessary preliminaires and definitions about self-shrinkers and Morse index. In section $3$ we construct closed $1$-forms that detect each asymptotically conical end and prove existence of Gaussian Harmonic Forms. In section $4$ we prove the index estimates.

We would like to remark that during the review of the final version of this work, D.Impera-M.Rimoldi-A.Savo \cite{impera-rimoldi-savo2018:arxiv} have published a major improvement to Mcgonagle's result by removing the condition on the principal curvatures as well as improving the index estimate.
Their methods are slightly different and in fact more general as they work with $f$-minimal hypersurfaces.
It is not clear what is the appropriate asymptotic structure at infinity for an arbitrary $f$-weighted area functional.
Together with their result we can marginally improve our index estimates (see section $4$).
We point out that our result remains novel since it also includes the contribution of each asymptotically conical end to the index.

\hfill

\textit{
Acknowledgements: I would like to thank Robert Haslhofer for suggesting this problem and for many helpful conversations at the beginning of this project. I am also thankful to Ailana Fraser and Jingyi Chen for several comments and discussions on this work. This project began when the author was a Postdoctoral Fellow at Universty of Toronto and I am grateful to Alex Nabutovsky for the invitation and the university for the hospitality.
}



\section{Preliminaires}
Let $\Sigma^2\subset \real^{3}$ be a smooth properly immersed surface of $\real^{3}$.
Denote by $\langle \cdot,\cdot \rangle$ the Euclidean metric, $\rnabla$ its connection, $g_\Sigma$ the induced Riemannian metric on $\Sigma$ and $\snabla$ its Levi-Civita connection.
The second fundamental form of $\Sigma$ is defined as $A_\Sigma(X,Y)=\rnabla_x Y - \snabla_X Y$ and the mean curvature vector is its trace on $\Sigma,$ $\meanvec_\Sigma=\trace_\Sigma(A_\Sigma)$.


\begin{definition}
We say that $\Sigma$ is a self-shrinker in $\real^{3}$ if
\begin{equation*}
   \meanvec_{\Sigma}(x)+\frac{x^\perp}{2}=0,
\end{equation*}
where $x^\perp$ denotes the projection onto the normal subspace $T_x\Sigma^\perp$ at $x\in \Sigma$.
\end{definition}

\begin{remark}
If $\Sigma$ is a self-shrinker then $\Sigma_t=\sqrt{-t}\Sigma$ is a solution for the mean curvature flow for $t\in(-\infty,0)$.
Vice-versa, if $\Sigma_t$ flows by mean curvature and $\Sigma_t=\sqrt{-t}\Sigma_{-1}$ then $\Sigma_{-1}$ is a self-shrinker (see \cite[2.2]{colding-minicozzi2012}).
\end{remark}

We will, however, be mostly interested in the following variational characterization of self-shrinkers due to Colding-Minicozzi \cite{colding-minicozzi2012}.

\begin{definition}
Let $\Sigma\subset\real^{3}$, $x_0\in\real^{3}$ and $t_0\geq 0$ and define the functional $F_{x_0,t_0}$ as
\begin{equation*}
   F_{x_0,t_0}(\Sigma)=\frac{1}{4\pi t_0}\int_\Sigma e^{-\frac{|x-x_0|^2}{4t_0}}\dvol_\Sigma,
\end{equation*}
where $\dvol_\Sigma$ denotes the volume form of $\Sigma$.

We are only interested when $x_0=0$ and $t_0=1$.
Henceforth denote the $F$-functional as $F=F_{0,1}$.
\end{definition}

Critical points of the $F$-functional are in fact minimal surfaces with respect to the incomplete Gaussian metric in $\real^{3}$.
%
%
Throughout the remainder of this section let $\Phi_s:\Sigma\rightarrow\real^{3}$ be a one-parameter family of embeddings of $\Sigma$ with $\Phi_0=id_\Sigma$ the identity map.
Denote $\Sigma_s=\Phi_s(\Sigma)$ and $X_s(x)=\frac{\partial\Phi_s(x)}{\partial s}$ the variation vector field.

\begin{proposition}[First and Second variation formulas, {\cite[3.1, 4.14]{colding-minicozzi2012}}]
Let $\Sigma_s$ be a one-parameter variation of $\Sigma$ with variation vector field $X_s$.
Suppose $X_s$ has compact support and 
$X_0=u\cdot\normalvec_{\Sigma}$ where $u:\Sigma\rightarrow\real$ is a compactly supported smooth function on $\Sigma$.
Then,
\begin{equation*}
   \frac{d}{ds}\rest{s=0}F(\Sigma_s)=\frac{1}{4\pi}\int_\Sigma\left[\left(\meancurv_\Sigma-\langle\frac{x}{2},\normalvec_\Sigma\rangle\right)u\right]\gauss \dvol_\Sigma.
\end{equation*}
Consequently $\Sigma$ is a self-shrinker if, and only if, it is a critical point for the $F$-functional.

If in addition $\Sigma$ is a self-shrinker and no boundary then,
\begin{equation*}
   \begin{aligned}
      \frac{d^2}{ds^2}\rest{s=0}F(\Sigma_s) & = \frac{1}{4\pi}\int_\Sigma u\left[-\slaplace u +\langle\frac{x}{2},\snabla u\rangle-\left(|A_\Sigma|^2+\frac{1}{2}\right)u\right]\gauss \dvol_\Sigma\\
                                            & = \frac{1}{4\pi}\int_\Sigma\left[|\snabla u|^2-\left(|A_\Sigma|^2+\frac{1}{2}\right)u^2\right]\gauss\dvol_\Sigma,
   \end{aligned}
\end{equation*}
where $\slaplace u=\divergence_\Sigma(\snabla u)$ is the Laplacian on $\Sigma$ and $|A_\Sigma|^2=\trace(A_\Sigma\circ A_\Sigma^{T})$ is the squared norm of the second fundamental form.
\end{proposition}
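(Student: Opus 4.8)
The plan is to pull the geometry back to the fixed domain $\Sigma$ and differentiate under the integral sign, treating the first variation first and then bootstrapping it to the second. Writing $F(\Sigma_s)=\frac{1}{4\pi}\int_\Sigma e^{-|\Phi_s|^2/4}\,\dvol_{\Sigma_s}$, where $\dvol_{\Sigma_s}$ denotes the volume form of the induced metric pulled back to $\Sigma$, the $s$-derivative splits into a contribution from the Gaussian weight and one from the area element. Since $\frac{d}{ds}|\Phi_s|^2=2\langle\Phi_s,X_s\rangle$, the weight contributes $-\tfrac12\langle x,X_0\rangle\gauss=-\tfrac12 u\langle x,\normalvec_\Sigma\rangle\gauss$, while the classical first variation of area gives $\frac{d}{ds}\rest{s=0}\dvol_{\Sigma_s}=\divergence_\Sigma(X_0)\,\dvol_\Sigma$, and for the normal field $X_0=u\normalvec_\Sigma$ one has $\divergence_\Sigma(u\normalvec_\Sigma)=u\,\divergence_\Sigma\normalvec_\Sigma=u\meancurv_\Sigma$ with $\meancurv_\Sigma:=\divergence_\Sigma\normalvec_\Sigma$ (so that $\meanvec_\Sigma=-\meancurv_\Sigma\normalvec_\Sigma$). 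Adding the two contributions yields the stated first variation formula; since $u$ is an arbitrary compactly supported function, the first variation vanishes for all such $u$ exactly when $\meancurv_\Sigma=\tfrac12\langle x,\normalvec_\Sigma\rangle$, which is the scalar form of the self-shrinker equation $\meanvec_\Sigma+\tfrac{x^\perp}{2}=0$.

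For the second variation I would exploit that on a self-shrinker the integrand factor $f_s:=\meancurv_{\Sigma_s}-\tfrac12\langle x,\normalvec_{\Sigma_s}\rangle$ vanishes identically at $s=0$. Writing the first variation for general $s$ as $\frac{d}{ds}F(\Sigma_s)=\frac{1}{4\pi}\int_{\Sigma_s}f_s\,\langle X_s,\normalvec_{\Sigma_s}\rangle\,\gauss\dvol_{\Sigma_s}$ and differentiating once more, the product rule produces several terms, but every term in which the derivative does not fall on $f_s$ carries a factor of $f_0=0$; hence only $\frac{d^2}{ds^2}\rest{s=0}F=\frac{1}{4\pi}\int_\Sigma\big(\frac{d}{ds}\rest{s=0}f_s\big)\,u\,\gauss\dvol_\Sigma$ survives. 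Because $f_s$ is a reparametrization-invariant quantity of the image surface, its variation depends only on the normal speed $u$, so I may take $X_0=u\normalvec_\Sigma$ throughout.

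It then remains to linearize $f_s$. This uses two classical ingredients: the linearization of the mean curvature under a normal variation, $\frac{d}{ds}\rest{s=0}\meancurv_{\Sigma_s}=-\slaplace u-|A_\Sigma|^2 u$ (the ambient $\ricci(\normalvec_\Sigma,\normalvec_\Sigma)$ term vanishing since the ambient space is $\real^3$), and the variation of the unit normal $\frac{d}{ds}\rest{s=0}\normalvec_{\Sigma_s}=-\snabla u$, which gives $\frac{d}{ds}\rest{s=0}\langle x,\normalvec_{\Sigma_s}\rangle=u-\langle x,\snabla u\rangle$. Substituting, $\frac{d}{ds}\rest{s=0}f_s=-\slaplace u+\tfrac12\langle x,\snabla u\rangle-(|A_\Sigma|^2+\tfrac12)u$, which is exactly the operator in the first displayed form of the second variation. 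To pass to the second displayed form I would integrate by parts with the Gaussian weight: noting $\snabla|x|^2=2x^\top$ and $\langle x^\top,\snabla u\rangle=\langle x,\snabla u\rangle$, one checks $\divergence_\Sigma(\gauss\,\snabla u)=\gauss\,\wslaplace u$ where $\wslaplace u=\slaplace u-\tfrac12\langle x,\snabla u\rangle$, so that $-\slaplace u+\tfrac12\langle x,\snabla u\rangle=-\wslaplace u$ and $\wslaplace$ is formally self-adjoint on $\wltwo$; pairing against $u$ with compact support gives $\int_\Sigma u(-\wslaplace u)\,\dgauss=\int_\Sigma|\snabla u|^2\,\dgauss$, which converts the first form into the second.

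I expect the main obstacle to be the linearization of the mean curvature, $\frac{d}{ds}\rest{s=0}\meancurv_{\Sigma_s}=-\slaplace u-|A_\Sigma|^2 u$: this is the genuine second-variation-of-area computation, and it is the one place where the sign conventions for $\meancurv_\Sigma$, $\normalvec_\Sigma$ and $A_\Sigma$ must be fixed and tracked with care so that the self-shrinker equation and both variation formulas remain mutually consistent.
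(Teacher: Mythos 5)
Your proof is correct: the splitting of the first variation into the weight and area contributions, the sign bookkeeping with $\meancurv_\Sigma=\divergence_\Sigma\normalvec_\Sigma$ and $\meanvec_\Sigma=-\meancurv_\Sigma\normalvec_\Sigma$, the linearizations $\frac{d}{ds}\rest{s=0}\meancurv_{\Sigma_s}=-\slaplace u-|A_\Sigma|^2u$ and $\frac{d}{ds}\rest{s=0}\normalvec_{\Sigma_s}=-\snabla u$, and the weighted integration by parts all check out and are mutually consistent with the self-shrinker equation. The paper itself gives no proof --- it quotes this result from Colding--Minicozzi --- and your argument is essentially the standard derivation found there (linearizing $\meancurv_\Sigma-\frac{1}{2}\langle x,\normalvec_\Sigma\rangle$ at a critical point), so there is nothing to add.
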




To simplify notation, let us write $\dgauss_\Sigma=\gauss\dvol_\Sigma$ for the weighted volume form, $\wslaplace u = \slaplace u - \langle\frac{x}{2},\snabla u\rangle$ for the weighted Laplacian on $\Sigma$, $\wsjacobi=\wslaplace +\left(|A_\Sigma|^2+\frac{1}{2}\right)$ for the weighted Jacobi operator on $\Sigma$ and $Q^\Sigma(u,u)=-\int_\Sigma u\wsjacobi u\dgauss$ its associated quadratic form.

\begin{definition}
For each $R>0$ we define $\index(\Sigma\cap B_R(0))$ as the number of negative eigenvalues of $\wsjacobi$ in $\Sigma\cap B_R(0)$ with Dirichlet boundary conditions. That is, solutions to
\begin{equation*}
   \left \{
      \begin{aligned}
      & \wsjacobi u +\lambda u = 0, \text{ on } \Sigma\cap B_R(0)\\
      & u=0 \text{, on } \partial(\Sigma\cap B_R(0))
      \end{aligned}
   \right.
\end{equation*}
with $\lambda<0$.
The Morse index of a self-shrinker $\Sigma$ is defined as $\index(\Sigma)=\limsup_{R\rightarrow\infty}\index(\Sigma\cap B_R(0))$, possibly being infinite.
\end{definition}

\begin{remark}
It is important to mention that the definition of $F$-stability in \cite{colding-minicozzi2012} is different from having positive Morse index as we defined above.
In their definition they consider the functional $F_{x,t}$ and also take into account variations of $x$ and $t$, whereas we have fixed $x=0$, $t=1$ and are looking at the Morse index of $F_{0,1}$ at a critical point.
In fact, it follows from \cite[5.2,9.2]{colding-minicozzi2012} that every self-shrinker has Morse index at least $1$.
\end{remark}

Let us define $\wltwo(\Sigma)$ as the completion of $\cptf(\Sigma)$ under the $\wltwo$-norm.
We conclude this section with a result due to D. Fischer-Colbrie \cite{fischer-colbrie1985}.
The proof in \cite{fischer-colbrie1985} carry out trivially in the weighted $L^2$-space.
See also \cite[1.4]{devyver2012} for a more general result.

\begin{proposition}[{\cite[p.124]{fischer-colbrie1985}}]\label{fischercolbrie}
A self-shrinker $\Sigma$ has finite index $\index(\Sigma)=k$ if, and only if, there exists and $\wltwo$-orthonormal set of eigenfunctions $f_1,\ldots,f_k\in \wltwo(\Sigma)$ with eigenvalues $\lambda_1,\ldots,\lambda_k<0$ and every $\phi\in\cptf(\Sigma)$ $\wltwo$-orthogonal to $f_i$ for all $i$ satisfies $Q^\Sigma(\phi,\phi)\geq 0$.
\end{proposition}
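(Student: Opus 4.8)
The plan is to adapt Fischer-Colbrie's argument directly to the weighted operator $\wsjacobi$, using that domain monotonicity for Dirichlet eigenvalues makes $\index(\Sigma\cap B_R(0))$ nondecreasing in $R$, so that $\index(\Sigma)=\sup_R\index(\Sigma\cap B_R(0))$ and finiteness means this supremum is attained. I would first dispatch the easy implication. Given $\wltwo$-orthonormal eigenfunctions $f_1,\dots,f_k$ with $\wsjacobi f_i=-\lambda_i f_i$ and $\lambda_i<0$, polarization of the quadratic form gives $Q^\Sigma(f_i,f_j)=-\int_\Sigma f_i\,\wsjacobi f_j\,\dgauss=\lambda_j\delta_{ij}$, so $Q^\Sigma$ is negative definite on their span. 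Multiplying the $f_i$ by a smooth cutoff supported in $B_R(0)$ and using continuity of $Q^\Sigma$ in the weighted Sobolev norm, the span stays negative definite after truncation once $R$ is large; hence $\index(\Sigma\cap B_R(0))\geq k$ and $\index(\Sigma)\geq k$. For the reverse bound, if some $\Sigma\cap B_R(0)$ carried a $(k+1)$-dimensional subspace $V$ on which $Q^\Sigma<0$, then $V$ would meet the codimension-$k$ subspace $\{f_1,\dots,f_k\}^\perp$ in a nonzero $\phi\in\cptf(\Sigma)$ with simultaneously $Q^\Sigma(\phi,\phi)<0$ and, by hypothesis, $Q^\Sigma(\phi,\phi)\geq 0$, a contradiction. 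Thus $\index(\Sigma)=k$.

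For the converse, assume $\index(\Sigma)=k$. I would first show there is a compact $K\subset\Sigma$ with $Q^\Sigma(\phi,\phi)\geq 0$ for every $\phi\in\cptf(\Sigma\setminus K)$: otherwise one produces negative directions supported in disjoint annuli escaping to infinity, and their disjoint supports make $Q^\Sigma$ negative definite on the span of arbitrarily many of them, forcing $\index(\Sigma)=\infty$. Next, along an exhaustion $\Omega_j=\Sigma\cap B_{R_j}(0)$ I would take the $k$ lowest Dirichlet eigenpairs $(\lambda_i^j,u_i^j)$ of $\wsjacobi$. Domain monotonicity makes the $\lambda_i^j$ nonincreasing, and they stay bounded below because the stability of $\Sigma\setminus K$ confines the negative spectrum to the compact region $K$, on which $\wsjacobi$ is bounded below; hence $\lambda_i^j\to\lambda_i<0$. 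Normalizing the $u_i^j$ in $\wltwo$ and applying interior elliptic estimates, a diagonal subsequence converges in $C^2_{\mathrm{loc}}$ to functions $f_i$ solving $\wsjacobi f_i+\lambda_i f_i=0$.

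The main obstacle is the behaviour at infinity: I must rule out loss of $\wltwo$-mass so that $f_i\in\wltwo(\Sigma)$ and the limiting family remains orthonormal rather than degenerating. This is precisely where the stability of $\Sigma\setminus K$ is used. Combining the stability inequality with $\lambda_i<0$ yields a uniform integrability estimate for the $u_i^j$ on $\Sigma\setminus K$ — either by comparison with a positive supersolution of $\wsjacobi+\lambda_i$ at infinity, or by testing the eigenvalue equation against $u_i^j$ times a cutoff and absorbing the gradient term — which bounds the weighted $L^2$-mass outside large balls independently of $j$. This forces $f_i\in\wltwo(\Sigma)$ and preserves $\wltwo$-orthonormality in the limit. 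Finally, nonnegativity of $Q^\Sigma$ on $\{f_1,\dots,f_k\}^\perp\cap\cptf(\Sigma)$ follows from $\index(\Sigma)=k$: a compactly supported $\phi$ orthogonal to all $f_i$ with $Q^\Sigma(\phi,\phi)<0$ would, together with truncations of $f_1,\dots,f_k$, produce $k+1$ negative directions on a large ball, contradicting the index bound.
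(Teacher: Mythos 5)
Your outline is correct and coincides with the paper's approach: the paper gives no proof of this proposition, citing Fischer-Colbrie and remarking only that her argument carries over verbatim to the weighted $\wltwo$ setting. Your sketch is exactly that argument adapted to $\wsjacobi$ — stability outside a compact set, Dirichlet eigenpairs along an exhaustion, and the weighted $L^2$ estimate at infinity to rule out mass loss — so nothing further is needed.
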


\section{Asymptotically Conical ends}
In this section we will use the asymptotic structure to construct $\wltwo$-integrable closed $1$-forms.

\begin{definition}
A surface $M\subset\real^{3}$ is said to be asymptotically conical if there exists a cone $C_\gamma=\{r\cdot\gamma: r>0\}$, where $\gamma\subset S^2$ is a smooth closed curve, such that $\lambda M$ converges graphically smoothly in compact sets to $C_\gamma$ as $\lambda\rightarrow 0^+$.
That is, for all $R>0$, $\lambda M\cap B_R\setminus\bar{B}_{\frac{1}{R}}$ converges graphically smoothly to $C_\gamma\cap B_R\setminus\bar{B}_{\frac{1}{R}} $, with $B_R$ being the Euclidean ball centered at $0$ of radius $R$.

We say that $\Sigma\subset \real^3$ has $r$ asymptotically conical ends if there exists $R_0>0$ such that $\Sigma\setminus \bar{B}_{R_0}=E_1\cup\ldots\cup E_r$ where $\{E_i\}_{i=1,\ldots,r}$ are pairwise disjoint asymptotically conical.
\end{definition}

For an asymptotically conical surface we have the following coordinate system proved in \cite{wang_lu2014}.

\begin{lemma}[{\cite[2.2]{wang_lu2014}}]\label{ends coordinate}
If $\Sigma$ is asymptotic to a cone $C_\gamma$ then there exist constants $R_1, C>0$ depending only on the curvature of $\gamma$ in $S^2$, $K\subset \Sigma$ a compact set and $v:C_\gamma\setminus B_{R_1}\rightarrow\real$ a smooth function such that:
\begin{enumerate}[(i)]
\item $\|v\|_{C^1}\leq C$ and
\item $\Sigma\setminus K=\{x+v(x)\vec{N}_{C_\gamma}(x): x\in C_\gamma\setminus B_{R_1}\}$,
\end{enumerate}
where $\vec{N}_{C_\gamma}$ is the normal vector field on the cone.
\end{lemma}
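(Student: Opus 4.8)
The plan is to build the single graph function $v$ out of the scale-by-scale graphical convergence in the definition, exploiting two elementary facts about the limit cone: $C_\gamma$ is invariant under every dilation $x\mapsto\lambda x$, and its unit normal is dilation-invariant, $\vec{N}_{C_\gamma}(\lambda x)=\vec{N}_{C_\gamma}(x)$. First I would apply the definition with a fixed radius, say $R=4$, interpreting ``converges graphically smoothly'' as the existence of normal-graph representations with $C^\infty$-small graphing functions. This produces a $\lambda_0>0$ so that for every $0<\lambda\le\lambda_0$ one has $\lambda M\cap(B_4\setminus\bar B_{1/4})=\{y+v_\lambda(y)\vec{N}_{C_\gamma}(y): y\in C_\gamma\cap(B_4\setminus\bar B_{1/4})\}$ with $\|v_\lambda\|_{C^1}\to 0$ as $\lambda\to 0$. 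The tubular-neighbourhood theorem for $C_\gamma$ away from the vertex guarantees this representation is \emph{unique} once $\|v_\lambda\|_{C^1}$ is small, a fact I would use repeatedly.

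Next I would define $v$ by a scaling recipe. For $x\in C_\gamma$ with $|x|$ large, choose $\lambda$ with $\lambda|x|\in[1,2]$, so that $\lambda x$ lands in the fixed annulus, and set
\[
v(x)=\lambda^{-1}v_\lambda(\lambda x).
\]
Independence of the admissible choice of $\lambda$ is automatic: since $\lambda'M=(\lambda'/\lambda)(\lambda M)$ and both $C_\gamma$ and $\vec{N}_{C_\gamma}$ are dilation-invariant, the graphing functions must transform by $v_{\lambda'}(y)=(\lambda'/\lambda)\,v_\lambda((\lambda/\lambda')y)$, which renders $\lambda^{-1}v_\lambda(\lambda x)$ well defined. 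Differentiating the recipe and using the same invariance gives $\nabla v(x)=(\nabla v_\lambda)(\lambda x)$ exactly, so $|\nabla v|$ is bounded by $\sup_{0<\lambda\le\lambda_0}\|v_\lambda\|_{C^1}$, a finite quantity that I can make small by shrinking $\lambda_0$; likewise $|v(x)|/|x|=|v_\lambda(\lambda x)|/(\lambda|x|)\le\sup|v_\lambda|$ since $\lambda|x|\ge1$. This yields $\|v\|_{C^1}\le C$ in the scale-invariant sense, with $C$ governed by the convergence rate, which is controlled by the curvature of $\gamma$ in $S^2$.

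Finally I would take $R_1$ comparable to $\lambda_0^{-1}$ and let $K$ be the compact part of $\Sigma$ inside the corresponding ball, and verify that $x\mapsto x+v(x)\vec{N}_{C_\gamma}(x)$ is a diffeomorphism from $C_\gamma\setminus B_{R_1}$ onto $\Sigma\setminus K$. Injectivity and embeddedness of the image follow from the smallness of $\|v\|_{C^1}$ together with the tubular-neighbourhood diffeomorphism for the cone; surjectivity onto the full end --- that no piece of $\Sigma$ outside $K$ escapes the graph --- is where properness of the immersion and the capture of all of $\lambda M$ in the annulus as a graph are used.

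I expect the main obstacle to be exactly this passage from the local, scale-dependent graphical convergence to a \emph{single global} graph with \emph{uniform} $C^1$ control. The definition only supplies representations annulus-by-annulus, with no a priori uniformity as the outer radius grows; the entire point is to upgrade this to honest uniform estimates, and the mechanism that makes it work is the exact dilation-invariance of $C_\gamma$ and of $\vec{N}_{C_\gamma}$, which simultaneously propagates the bounds across all dyadic scales and forces the scale-dependent graphing functions $v_\lambda$ to be mutually consistent.
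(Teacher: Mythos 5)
You should first note that the paper itself contains no proof of this lemma: it is imported from Wang's paper, accompanied only by the remark that Wang's argument requires nothing beyond the blow-down convergence $\lambda\Sigma\rightarrow C_\gamma$. Your proposal reconstructs that argument, and the core of it is sound and is the standard one: fix a single annulus, use the dilation invariance of $C_\gamma$ and of $\vec{N}_{C_\gamma}$ together with uniqueness of normal graphs over the cone away from the vertex to show that the scale-dependent graphing functions satisfy $v_{\lambda'}(y)=(\lambda'/\lambda)\,v_\lambda((\lambda/\lambda')y)$, and patch them into one function via $v(x)=\lambda^{-1}v_\lambda(\lambda x)$. The identity $\nabla v(x)=(\nabla v_\lambda)(\lambda x)$, the resulting uniform gradient bound, and your identification of surjectivity onto the whole end as the step that consumes properness are all correct.

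The one point of friction is item (i). Your construction gives $|\nabla v|\le C$ and $|v(x)|\le C|x|$, i.e., a scale-invariant $C^1$ bound, and you cannot do better from the paper's definition alone: a normal graph of height $|x|^{1/2}$ over a cone still blows down to the cone, so $\sup|v|<\infty$ is simply not a consequence of the hypothesis (Wang obtains genuine decay of $v$ only from the shrinker equation, which the paper explicitly declines to use here). So either $\|v\|_{C^1}$ in the statement must be read as the scale-invariant norm, or item (i) is slightly overstated. Either way the damage is nil for this paper: the only use of the lemma is the metric estimate for $\Phi_i(r,s)=r\gamma_i(s)+v_i(r\gamma_i(s))\vec{N}_{\gamma_i}(s)$ in Lemma \ref{closed forms construction}, which needs exactly $|v|\le Cr$ and $|\nabla v|\le C$, and that is what your argument delivers. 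A similar caveat applies to the assertion that $R_1$ depends only on the curvature of $\gamma$: in this generality $R_1$ must also depend on the rate of the blow-down convergence of $\Sigma$ itself, although $C$ can indeed be made universal by enlarging $R_1$.
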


In \cite{wang_lu2014} Wang proves a more general result for self-shrinkers. 
However, the proof of the above statement only requires $\lambda\Sigma$ to converge to a cone as $\lambda\rightarrow 0^+$.

\subsection*{$\wltwo$-Integrable Closed $1$-forms}
The following construction will be the basis to find non-trivial Gaussian Harmonic Forms for each pair of asymptotically conical ends.

\begin{lemma}\label{closed forms construction}
Let $\Sigma\subset\real^{3}$ be a surface with asymptotically conical ends and denote by $\bar{\Sigma}$ its one-point compactification.
If $\beta:[0,1]\rightarrow\Sigma$ is a separating simple closed curve in $\Sigma$ but non-separating in $\bar{\Sigma}$ then there exists $\alpha:\real\rightarrow\Sigma$ that intersects $\beta$ at exactly one point and a $\wltwo$-integrable closed $1$-form $\eta$ supported in a neighbourhood of $\alpha$ such that $\int_\beta\eta=1$.

Furthermore, for any simple closed curve $\sigma$ that intersects $\alpha$ only at one point transversally, there exists a closed $1$-form $\tau$ supported at a neighbourhood of $\sigma$ such that $\int_\Sigma\tau\wedge\eta=\int_\sigma\eta$.

\end{lemma}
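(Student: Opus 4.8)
The plan is to realize $\eta$ as the Poincar\'e--Thom dual of a suitable properly embedded line $\alpha$, to realize $\tau$ as the Thom dual of $\sigma$, and to control the weighted norm of $\eta$ using the Gaussian decay on the conical ends; throughout I assume $\Sigma$ oriented so that the intersection pairings and $\int_\Sigma\tau\wedge\eta$ are well defined. First I would locate $\alpha$. Since $\beta$ splits $\Sigma$ into two pieces $\Sigma_1,\Sigma_2$ but does not separate $\bar\Sigma$, each of $\Sigma_1,\Sigma_2$ must carry at least one end: if, say, $\Sigma_2$ contained no end it would be relatively compact in $\bar\Sigma$, so $\beta$ would separate $\Sigma_2$ from $\Sigma_1\cup\{\infty\}$ in $\bar\Sigma$, contradicting the hypothesis. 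Fixing ends $E_1\subset\Sigma_1$, $E_2\subset\Sigma_2$ and a point $p\in\beta$, I concatenate a proper ray in $\Sigma_1$ from $p$ running to infinity along $E_1$ with a proper ray in $\Sigma_2$ from $p$ running to infinity along $E_2$, then perturb so that the result is a smooth properly embedded line $\alpha:\real\to\Sigma$ meeting $\beta$ transversally in the single point $p$.

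Next I would build $\eta$. Take the tubular neighbourhood $N(\alpha)\cong\alpha\times(-w,w)$ given by the normal exponential map, with signed transverse distance $t$, and choose $\phi:(-w,w)\to\real$ with $\phi\equiv0$ near $-w$ and $\phi\equiv1$ near $w$. Set $\eta=\phi'(t)\,dt$, extended by zero off $N(\alpha)$. Then $\eta$ is smooth, closed (locally $\eta=d\phi$), and supported near $\alpha$, and $\int_\beta\eta$ equals the signed count of intersections of $\beta$ with $\alpha$, i.e. $\pm1$; orienting $t$ appropriately gives $\int_\beta\eta=1$.

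For the weighted estimate I would note that $t$ is a distance function, so $|dt|=1$ and $|\eta|=|\phi'(t)|\le C$, whence
\begin{equation*}
\|\eta\|_{\wltwo}^2=\int_{N(\alpha)}|\eta|^2\,\dgauss\le C^2\int_\Sigma\gauss\,\dvol_\Sigma.
\end{equation*}
The last integral is finite: by Lemma \ref{ends coordinate} each end is a $C^1$-graph over a cone $C_\gamma$, and on a cone the area element grows only linearly in the radius while $\gauss$ decays like $e^{-r^2/4}$, so the Gaussian weighted volume of $\Sigma$ is finite. Hence $\eta\in\wltwoforms^1$. This is where the weighted setting enters, and I regard it as one of the two content-bearing steps: the essential point is that the distance-function parametrization keeps $|\eta|$ bounded as $\alpha$ runs out the ends, so that the (integrable) Gaussian weight controls the whole tail.

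Finally, for the \emph{furthermore} I would take $\tau$ to be the Thom dual of $\sigma$: with a thin tubular neighbourhood $N(\sigma)\cong\sigma\times(-\epsilon,\epsilon)$, transverse coordinate $s$, and $\rho$ rising from $0$ to $1$, set $\tau=\rho'(s)\,ds$ extended by zero. Since $\supp\tau\subset N(\sigma)$ and $\supp\eta\subset N(\alpha)$, while $\sigma\cap\alpha$ is the single transverse point $q$, the product $\tau\wedge\eta$ is supported in a small disk about $q$; in product coordinates adapted to $\alpha$ and $\sigma$ both $\int_\Sigma\tau\wedge\eta$ and $\int_\sigma\eta$ reduce to the factor $\big(\int\rho'\big)\big(\int\phi'\big)$, which agree once the sign (orientation) of $\rho$ is fixed, giving $\int_\Sigma\tau\wedge\eta=\int_\sigma\eta$. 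The main obstacle is thus not analytic but bookkeeping: besides the weighted bound above, one must argue cleanly that the hypothesis ``non-separating in $\bar\Sigma$'' forces an end on each side of $\beta$ (so that $\alpha$ exists as a proper line), and then keep orientations consistent so that the intersection numbers come out with the stated signs.
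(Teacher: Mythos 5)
Your construction is essentially the paper's: you locate an end of $\Sigma$ on each side of $\beta$ (same topological argument), run a proper line $\alpha$ out both ends through the single crossing with $\beta$, and take $\eta$ to be the differential of a cutoff in the direction transverse to $\alpha$, i.e.\ the Thom dual of $\alpha$; the ``furthermore'' is handled by the same localization/Fubini computation in a product tube. The one genuinely different step is the integrability. The paper works in the conical graph coordinates $\Phi_i(r,s)$ of Lemma \ref{ends coordinate} and cuts off in the \emph{angular} variable $s$, so the tube has metric width growing like $r$ and $|\eta|\lesssim 1/(r|\dot\gamma_i|)$ decays; integrability then follows from $\int_{R_1}^\infty e^{-r^2/4}r^{-1}\,dr<\infty$. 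You instead use a metric tube of \emph{fixed} width $w$ with transverse distance $t$, so $|\eta|=|\phi'(t)|$ is merely bounded, and you conclude from finiteness of the total weighted area $\int_\Sigma\gauss\dvol_\Sigma$ (which does hold, by the quadratic area growth of conical ends). Your route is cleaner when it works, and it also makes transparent the Remark after the lemma fails for you: your $\eta$ is actually in unweighted $L^2$ on the ends, unlike the paper's.

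The one point you assert without justification, and where the asymptotically conical hypothesis must actually be invoked, is the existence of a \emph{uniform} lower bound $w>0$ for the normal injectivity radius of the proper line $\alpha$ all the way out the two ends. For a general proper line in a general complete surface this can fail, and if the admissible width decays, $|\phi'(t)|$ blows up and your bound $|\eta|\le C$ collapses. Here it is true: by Lemma \ref{ends coordinate} the end is a bounded-gradient graph over $C_{\gamma_i}$, the induced metric is uniformly comparable to the cone metric $dr^2+r^2|\dot\gamma_i|^2\,ds^2$, and the distance from the ray $\{s=\tfrac12\}$ to itself around the link grows linearly in $r$, so a fixed-width (bi-Lipschitz) collar with a transverse coordinate $t$ satisfying $|dt|\le C$ exists for $r\ge R_1$; on the remaining compact piece compactness suffices. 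Adding this observation closes the argument; without it the weighted estimate is not yet proved. A second, minor, point: $\supp(\tau\wedge\eta)$ need not lie only near the intersection point $q$ (the compact curve $\sigma$ may re-enter $N(\alpha)$ without crossing $\alpha$), but those excursions contribute $\phi(t_{\mathrm{exit}})-\phi(t_{\mathrm{entry}})=0$, so the identity $\int_\Sigma\tau\wedge\eta=\bigl(\int\rho'\bigr)\bigl(\int_\sigma\eta\bigr)=\int_\sigma\eta$ survives; this is bookkeeping, not a gap.
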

\begin{proof}
Firstly, observe that the existence of such a curve $\beta$ implies that $\Sigma$ must have at least two ends.
So, there is $R_0>0$ such that $\Sigma\setminus B_{R_0}=E_1\cup\ldots\cup E_r$ where each $E_i$ is asymptotic to a cone $C_{\gamma_i}$ and $r\geq 2$.

Secondly, because $\beta$ is separating in $\Sigma$ we can write $\Sigma\setminus\beta=U_1\cup U_2$, where $U_i$ are disjoint open sets, $i=1,2$.
By abuse of notation we identify $\beta$ with its image.
Since $\beta$ is non-separating in $\bar\Sigma$ we may infer that each $U_i$ must contain at least one end.
Without loss of generality we assume that $E_1\subset U_1$ and $E_2\subset U_2$.

Let $R_1>0$, $K_i\subset E_i$ and $v_i:C_{\gamma_i}\setminus R_1\rightarrow \real$ be as in Lemma \ref{ends coordinate}.
For each $i=1,2$ we have the following coordinate system $\Phi_i:[R_1,\infty)\times[0,1]\rightarrow E_i\setminus K_i$ given by
\begin{equation*}
   \Phi_i(r,s)=r\gamma_i(s)+v_i(r\gamma_i(s))\vec{N}_{\gamma_i}(s),
\end{equation*}
where $\gamma_i$ is parametrized in $[0,1]$, $\vec{N}_{\gamma_i}(s)$ its normal vector in $S^2$, which is equal to $\vec N_{C_{\gamma_i}}(r\gamma_i(s))$.

Let $\alpha_i(t)=\Phi_i(t,\frac{1}{2})$ and $\tilde\alpha:[-R_1,R_1]\rightarrow \Sigma\setminus(E_1\setminus K_1\cup E_2\setminus K_2)$ be any smooth curve connecting $\alpha_1(R_1)$ to $\alpha_2(R_1)$ and intersecting $\beta$ at only one point and transversally.
We may assume that $\tilde\alpha$ intersects $\beta$ at $\beta(\frac{1}{2})$.

Finally, define $\alpha:\real\rightarrow\Sigma$ as the concatenation of the three curves $\alpha_1$, $\tilde\alpha$ and $\alpha_2$.
In particular, $\alpha$ intersects $\beta$ at only one point and $\alpha(t)=\alpha_1(t)$ for $t\geq R_1$, $\alpha(t)=\alpha_2(-t)$ for $t\leq -R_1$ and $\alpha(t)=\tilde\alpha(t)$ for $t\in[-R_1,R_1]$.

\begin{center}
\begin{tikzpicture}[scale=0.4]
\draw  plot[smooth, tension=.7] coordinates {(-0.5,-0.5) (0.5,-1) (1,-0.5)};
\draw  plot[smooth, tension=.7] coordinates {(1,-0.5) (1.5,0) (2,-0.5) (2.5,-1) (3,-1.5) (3.5,-2) (4,-2.5)};
\draw  plot[smooth, tension=.7] coordinates {(2,0.5) (3,0) (4,-0.5) (4.5,-1) (5.5,-1.5) (6.5,-2)};
\draw  plot[smooth, tension=.7] coordinates {(4,-2.5) (4.5,-3) (5,-3.5)};
\draw  plot[smooth, tension=.7] coordinates {(2,0.5) (2.5,1.5) (2,2.5) (1,3) (-0.5,3) (-1,2.5) (-1.5,1.5) (-1.5,0.5) (-2.5,0) (-3.5,-0.5) (-4.5,-1) (-5.5,-1.5)};
\draw  plot[smooth, tension=.7] coordinates {(-0.5,-0.5) (-1.5,-1) (-2,-1.5) (-2.5,-2) (-3.5,-2.5) (-4,-3) (-4.5,-3.5)};
\draw  plot[smooth, tension=.7] coordinates {(-0.1,2.1) (0.5,1.5) (0.5,1)};
\draw  plot[smooth, tension=.7] coordinates {(0,2) (0,1.5) (0.5,1.1)};
\draw  plot[smooth, tension=.7] coordinates {(2,2.5)};
\draw[thick]  plot[smooth, tension=.7] coordinates {(2,2.5) (2,1.5) (1.5,1) (1.5,0)};
\draw[dashed]  plot[smooth, tension=.7] coordinates {(1.5,0) (1,0.5) (1,1) (1,1.5) (1.5,2) (2,2.5)};
\node[above right] at (2, 2.5) {$\beta$};
\draw  plot[smooth, tension=.7] coordinates {(-2.5,0) (-1.5,-0.5) (-1.5,-1)};
\draw  plot[smooth, tension=.7] coordinates {(-1.5,-1)};
\draw[dashed]  plot[smooth, tension=.7] coordinates {(-1.5,-1) (-2,-1) (-2.5,-0.5) (-2.5,0)};
\draw  plot[smooth, tension=.7] coordinates {(2.5,-1) (2.5,-0.5) (3,0)};
\draw  plot[smooth, tension=.7] coordinates {(3,0)};
\draw[dashed]  plot[smooth, tension=.7] coordinates {(3,0) (3,-1) (2.5,-1)};
\draw[thick] (-1.5,-0.5) node (v1) {} -- (-5.5,-3) node[below left] {$\alpha_2$}  ;
\draw[thick] (2.5,-0.5) node (v2) {} -- (5.5,-3) node [below right] {$\alpha_1$};
\draw[thick]  plot[smooth, tension=.7] coordinates {(v1) (-1,0) (-0.5,0.5) (0.5,0.5) (1.5,1) (2,0) (v2)};
\node[below] at (0.5,0.5) {$\tilde\alpha$};
\node[above] at (5.5,-1.5) {$E_1$};
\node[above] at (-4.5,-1) {$E_2$};
\node[above] at (-0.5,3) {$\Sigma$};
\end{tikzpicture}
\end{center}

Now, by using parallel transport, for some $\varepsilon_1>0$ sufficiently small, we can define a smooth tubular neighbourhood $\tilde\Phi:[-R_1,R_1]\times(\frac{1}{2}-\varepsilon_1,\frac{1}{2}+\varepsilon_1)\rightarrow \Sigma$ of $\tilde\alpha$ so that:
\begin{enumerate}[(a)]
\item $\tilde\Phi(r,\frac{1}{2})=\tilde\alpha(r)$
\item $\tilde\Phi(0,s)=\beta(s)$
\item $\tilde\Phi(R_1,s)=\Phi_1(R_1,s)$ and $\tilde\Phi(-R_1,s)=\Phi_2(R_1,s)$
\end{enumerate}

We then define an open neighbourhood $\Phi:\real\times(\frac{1}{2}-\varepsilon_1,\frac{1}{2}+\varepsilon_1)\rightarrow\Sigma$ of $\alpha$ as follows:
\begin{equation*}
   \Phi(r,s)=\left\{
   \begin{aligned}
      & \Phi_1(r,s), \text{ if } r\geq R_1\\
      & \tilde\Phi(r,s), \text{ if } r\in(-R_1,R_1)\\
      & \Phi_2(-r,s), \text{ if } r\leq -R_1.
   \end{aligned}
   \right.
\end{equation*}

\begin{center}
\begin{tikzpicture}[scale=0.5]
\draw  plot[smooth, tension=.7] coordinates {(-0.5,-0.5) (0.5,-1) (1,-0.5)};
\draw  plot[smooth, tension=.7] coordinates {(1,-0.5) (1.5,0) (2,-0.5) (2.5,-1) (3,-1.5) (3.5,-2) (4,-2.5)};
\draw  plot[smooth, tension=.7] coordinates {(2,0.5) (3,0) (4,-0.5) (4.5,-1) (5.5,-1.5) (6.5,-2)};
\draw  plot[smooth, tension=.7] coordinates {(4,-2.5) (4.5,-3) (5,-3.5)};
\draw  plot[smooth, tension=.7] coordinates {(2,0.5) (2.5,1.5) (2,2.5) (1,3) (-0.5,3) (-1,2.5) (-1.5,1.5) (-1.5,0.5) (-2.5,0) (-3.5,-0.5) (-4.5,-1) (-5.5,-1.5)};
\draw  plot[smooth, tension=.7] coordinates {(-0.5,-0.5) (-1.5,-1) (-2,-1.5) (-2.5,-2) (-3.5,-2.5) (-4,-3) (-4.5,-3.5)};
\draw  plot[smooth, tension=.7] coordinates {(-0.1,2.1) (0.5,1.5) (0.5,1)};
\draw  plot[smooth, tension=.7] coordinates {(0,2) (0,1.5) (0.5,1.1)};
\draw  plot[smooth, tension=.7] coordinates {(2,2.5)};
\draw  plot[smooth, tension=.7] coordinates {(2,2.5) (2,1.5) (1.5,1) (1.5,0)};
\draw[dashed]  plot[smooth, tension=.7] coordinates {(1.5,0) (1,0.5) (1,1) (1,1.5) (1.5,2) (2,2.5)};
\draw  plot[smooth, tension=.7] coordinates {(-2.5,0) (-1.5,-0.5) (-1.5,-1)};
\draw  plot[smooth, tension=.7] coordinates {(-1.5,-1)};
\draw[dashed]  plot[smooth, tension=.7] coordinates {(-1.5,-1) (-2,-1) (-2.5,-0.5) (-2.5,0)};
\draw  plot[smooth, tension=.7] coordinates {(2.5,-1) (2.5,-0.5) (3,0)};
\draw  plot[smooth, tension=.7] coordinates {(3,0)};
\draw[dashed]  plot[smooth, tension=.7] coordinates {(3,0) (3,-1) (2.5,-1)};
\draw[thick] (-1.5,-0.5) node (v1) {} -- (-5.5,-3);
\draw[thick] (2.5,-0.5) node (v2) {} -- (5.5,-3);
\draw[thick]  plot[smooth, tension=.7] coordinates {(v1) (-1,0) (-0.5,0.5) (0.5,0.5) (1.5,1) (2,0) (v2)};



\draw[dotted, thick] (-1.4,-0.8) node (v4) {} -- (-4.8,-3.2);
\draw[dotted, thick] (-2,-0.2) node (v3) {} -- (-5.4,-2.2);
\draw[dotted, thick]  plot[smooth, tension=.7] coordinates {(v3) (-1.6,0) (-1.2,0.2) (-1,0.4) (-0.8,0.6) (-0.4,0.8) (0.2,0.8) (0.6,0.8) (1,1) (1.2,1.2) (1.6,1.2) (1.8,1) (2,0.8) (2,0.6) (2,0.2) (2.2,0) (2.6,-0.2)};
\draw[dotted, thick]  plot[smooth, tension=.7] coordinates {(2.6,-0.2) (2.8,-0.2)};
\draw[dotted, thick]  plot[smooth, tension=.7] coordinates {(v4) (-1.1,-0.6) (-0.7,-0.2) (-0.4,0.2) (0,0.3) (0.3,0.2) (0.8,0.2) (1.1,0.4) (1.3,0.5) (1.5,0.5) (1.7,0.2) (1.9,-0.1) (2.1,-0.4) (2.3,-0.6) (2.5,-0.8)};
\draw  plot[smooth, tension=.7] coordinates {(2.8,-0.2)};
\draw[dotted, thick] (2.8,-0.2) -- (5.8,-2.3);
\draw[dotted, thick] (2.5,-0.8) -- (5,-3.1);

\draw[very thin] (-4.7,-1.9) -- (-4.9,-3.1);
\draw[very thin] (-4.2,-1.6) -- (-4.4,-2.8);
\draw[very thin] (-3.8,-1.4) -- (-3.9,-2.4);
\draw[very thin] (-3.3,-1.1) -- (-3.4,-2.1);
\draw[very thin] (-2.8,-0.8) -- (-2.9,-1.8);
\draw[very thin] (-2.3,-0.5) -- (-2.4,-1.4);
\draw[very thin] (-1.8,-0.2) -- (-1.9,-1.1);
\draw[very thin] (-1.3,0.1) -- (-1.4,-0.8);
\draw[very thin] (-0.9,0.4) -- (-1,-0.5);
\draw[very thin] (-0.4,0.8) -- (-0.5,0.1);
\draw[very thin] (0,0.7) -- (-0.1,0.3);
\draw[very thin] (0.5,0.7) -- (0.4,0.2);
\draw[very thin] (1.1,1) -- (0.9,0.3);
\draw[very thin] (1.8,1) -- (1.6,0.5);
\draw[very thin] (2,0.5) -- (1.8,0.1);
\draw[very thin] (2.4,-0.2) -- (2.1,-0.3);
\draw[very thin] (2.9,-0.3) -- (2.4,-0.7);
\draw[very thin] (3.3,-0.6) -- (2.7,-0.9);
\draw[very thin] (3.7,-0.9) -- (3,-1.2);
\draw[very thin] (4.2,-1.3) -- (3.4,-1.6);
\draw[very thin] (4.7,-1.6) -- (3.9,-2);
\draw[very thin] (5.3,-2) -- (4.4,-2.5);

\draw[very thin]  plot[smooth, tension=.7] coordinates {(0.2,-0.) (0.1,-1.1) (0.6,-2)};
\node[below] at (0.6,-2) {$\Phi$}; 
\draw[very thin] (0.4,-0.2) -- (0.2,0) -- (0,-0.2);
\end{tikzpicture}
\end{center}

Observe that the map $\Phi$ is not necessarily smooth because the curve $\alpha$ may have a kink at $\pm R_1$.
However, it follows from $(a)$-$(c)$ and the definition that $\Phi$ satisfy the following properties:
\begin{enumerate}[(i)]
\item $\Phi(r,\frac{1}{2})=\alpha(r)$,
\item $\Phi(0,s)=\beta(s)$, $\Phi(R_1,s)=\Phi_1(R_1,s)$ and $\Phi(-R_1,s)=\Phi_2(R_1,s)$,
\item $\Phi$ is smooth on $\real\times(\frac{1}{2}-\varepsilon_1,\frac{1}{2}+\varepsilon_1)\setminus\{-R_1,R_1\}\times(\frac{1}{2}-\varepsilon_1,\frac{1}{2}+\varepsilon_1)$ and
\item for any $r\in\real$, $s\mapsto \Phi(r,s)$ is smooth in $s\in(\frac{1}{2}-\varepsilon_1,\frac{1}{2}+\varepsilon_1)$.
\end{enumerate}

Let us denote $V=\Phi(\real\times(\frac{1}{2}-\varepsilon_1,\frac{1}{2}+\varepsilon_1))$ the tubular neighbourhood of $\alpha$ on $\Sigma$.
We will now work on this coordinate system near $\alpha$.
Pick $\varepsilon_2>0$ be sufficiently small so that $3\varepsilon_2<\varepsilon_1$ and let $\varphi_0:[\frac{1}{2},\frac{1}{2}+\varepsilon_1)\rightarrow[0,1]$ be a cut-off function so that $\varphi_0\equiv 1$ on $[\frac{1}{2},\frac{1}{2}+\varepsilon_2]$, $\varphi_0\equiv 0$ on $[\frac{1}{2}+2\varepsilon_2,\frac{1}{2}+\varepsilon_1)$ and $\varphi_0'(s)\leq \frac{2}{\varepsilon_2}$.
Define $\varphi:V\rightarrow \real$ as:
\begin{equation*}
   \varphi(\Phi(r,s))=\left\{
   \begin{aligned}
      & \varphi_0(s), \text{ if } s\in\left[\frac{1}{2},\frac{1}{2}+\varepsilon_1\right)\\
      & 0, \text{ otherwise.}
   \end{aligned}
   \right.
\end{equation*}

Since $\varphi$ only depends on the coordinate $s$ and $\Phi$ is smooth on $s$ then $\varphi$ is a smooth function on $V\setminus\alpha$.
Clearly $\varphi$ may be extended to $\Sigma$ and $\eta=d\varphi$ is a smooth closed $1$-form on $\Sigma\setminus\alpha$.
We observe that $\eta\equiv 0$ on a $\varepsilon_2$-neighbourhood of $\alpha$ so, in fact, $\eta$ defines a smooth closed $1$-form on $\Sigma$.
One can easily see that the support of $\eta$ is contained in $V$ and $\int_\beta\eta=1$.

Now, let $\sigma$ be any closed curve intersecting $\alpha$ at only one point and tranversally, and take a tubular neighbourhood of $\sigma$ so its transversal section coincides with $r\mapsto\Phi(r,s)$ where it intersects with $V$.
We may repeat the constructions above and define $\bar\varphi(r)$ on a neighbourhood of $\sigma$ and $\tau=d\bar\varphi$ so that $\int_\alpha\tau=1$.
Because of the transversality assumption we will have that $\tau\wedge\eta=\varphi'(s)\bar\varphi(r)dr\wedge ds$ on the intersection with $V$.
Therefore, $\int_\Sigma \tau\wedge\eta=\int_\alpha\tau\int_\sigma\eta=\int_\sigma\eta$.

It remains to show that $|\eta|^2$ is integrable with respect to the Gaussian measure.
To prove this we only need to control the growth of $|\eta|^2$ in the $\Phi_1$ and $\Phi_2$ compoments of the tubular neighbourhood of $\alpha$.

It follows from Lemma \ref{ends coordinate} that there exists a constant $\tilde C$ depending only on $\gamma_i$ such that, using the coordinate $\Phi_i$ on $E_i\setminus K_i$, we have
\begin{equation*}
   \begin{aligned}
                          1 \leq  \langle\frac{\partial\Phi_i}{\partial r}, & \frac{\partial\Phi_i}{\partial r}\rangle   \leq 1+\tilde C, \\
        r^2|\dot\gamma_i|^2 \leq  \langle\frac{\partial\Phi_i}{\partial s}, & \frac{\partial\Phi_i}{\partial s}\rangle   \leq r^2|\dot\gamma_i|^2(1+\tilde C) \text{ and }\\
                            \left|\langle\frac{\partial\Phi_i}{\partial r}\right., & \left.\frac{\partial\Phi_i}{\partial s}\rangle\right| \leq r|\dot\gamma_i|\tilde C.
   \end{aligned}
\end{equation*}

In particular, we have $r|\dot\gamma_i|\leq\sqrt{det(g_{E_i\setminus K_i})}\leq r|\dot\gamma_i|(1+\tilde C)$.
Whenever $\eta$ is not zero we have $\eta(\Phi_i(r,s))=\varphi_0'(s)ds$, so $|\eta|_{E_i\setminus K_i}^2\leq \frac{\bar C}{r^2|\dot\gamma_i|^2}$, where $\bar C>0$ only depends on $\varepsilon_2$.

Finally we compute
\begin{equation*}
   \int_{E_i\setminus K_i}|\eta|^2\dgauss\leq (1+\tilde C)\bar C\int_0^1\int_{R_1}^\infty\frac{e^{-\frac{|\Phi_i(r,s)|^2}{4}}}{r|\dot\gamma_i|}dr ds\leq \hat C\int_{R_1}^\infty \frac{e^{-\frac{r^2}{4}}}{r}dr<\infty,
\end{equation*}
where $\hat C>0$ only depends on the curve $\gamma_i$ that defines the cone and $\varepsilon_2$.

This concludes the proof.
\end{proof}

\begin{remark}
Notice that the final calculation in the proof can also be adapted to show that $|\eta|\in L^{2+\varepsilon}(\Sigma)$ for all $\epsilon>0$ and $|\eta|\nin L^{2}(\Sigma)$.
\end{remark}

\begin{remark}\label{remark on closed forms}
A similar result may be obtained when $\beta$ is a simple closed non-separating curve in $\Sigma$, in which case the corresponding curve $\alpha$ can also be taken to be closed.
In this situation there is no integrability to be proven because the $1$-form obtained would have compact support.
Furthermore, if we pick $\sigma$ to be a line instead of a closed curve, the $1$-form $\tau$ can be constructed similarly to the above so that $|\tau|$ is $\wltwo$-integrable.
\end{remark}

Let $\wltwoforms^*(\Sigma)$ be the completion of smooth differential forms $\omega$ such that $|\omega|$ is $\wltwo$-integrable under the weighted norm.
In $\wltwoforms^*$ we can define the differential operator $d$ as usual and $\hodged$ its dual with respect to the weighted inner product $\langle\cdot,\cdot\rangle_\wltwo$.
That is, $\hodged=\mgauss d^*\gauss =d^*-\frac{1}{2}\iota_{x^\top}$, where $d^*$ is the dual to $d$ with respect to the Riemannian metric on $\Sigma$.
We also define $\whodge = \mgauss\star$, where $\star$ is the Hodge operator with respect to the Riemannian metric on $\Sigma$ (see \cite[\textsection 3]{bueler1999}).

\begin{definition}
We say that $\omega$ is a Gaussian Harmonic Forms (GHF for short) if $(d\hodged +\hodged d)\omega=0$.
Denote by $\ghf^1(\Sigma)$ the space of smooth Gaussian Harmonic $1$-forms in $\Sigma$
\end{definition}


The following existence of GHF of degree $1$ is a slight generalization of the result in \cite{mcgonagle2015} by Mcgonagle, in which we also include the $1$-forms constructed above and its weighted Hodge dual.
The proof is similar but we include it here for completeness.

\begin{theorem}\label{dimension}
If $\Sigma$ is a complete properly immersed surface in $\real^3$ of genus $g$ and $r$ asymptotically conical ends, then $dim(\wltwoforms^1\cap\ghf^1)\geq 2g+r-1$.
\end{theorem}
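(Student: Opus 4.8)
The plan is to realize the number $2g+r-1$ as the first Betti number of $\Sigma$, produce an explicit basis of $H^1(\Sigma;\real)$ by \emph{closed $\wltwo$-integrable} $1$-forms coming from Lemma \ref{closed forms construction} and Remark \ref{remark on closed forms}, and then replace each representative by its Gaussian harmonic representative using a weighted Hodge decomposition. Linear independence in $\ghf^1$ will be detected by pairing against the compactly supported dual forms $\tau$ also furnished by Lemma \ref{closed forms construction}.

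\textbf{Step 1 (topology).} Since $\Sigma$ is a connected genus $g$ surface with $r$ ends, one has $\chi(\Sigma)=2-2g-r$, $b_0=1$, and $b_2=0$ (a non-compact surface has no top homology), so $b_1(\Sigma)=1-\chi=2g+r-1$. I would fix a geometric basis of $H_1(\Sigma;\real)$ realized by pairwise disjoint simple closed curves $\beta_1,\dots,\beta_{2g+r-1}$: the first $2g$ are non-separating ``handle'' curves, and the remaining $r-1$ each enclose a single end. Such an enclosing curve is separating in $\Sigma$ but, since all $r$ ends are glued to the single point at infinity of $\bar\Sigma$, non-separating in $\bar\Sigma$, so it is of the type treated in Lemma \ref{closed forms construction}.

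\textbf{Step 2 (closed $\wltwo$ forms and their detectors).} To each $\beta_k$ I apply Remark \ref{remark on closed forms} (for the $2g$ non-separating curves, giving a \emph{compactly supported} closed $1$-form) and Lemma \ref{closed forms construction} (for the $r-1$ end-enclosing curves, giving a closed $1$-form whose $\wltwo$-integrability is exactly the content of the final computation of that proof). This yields closed forms $\eta_1,\dots,\eta_{2g+r-1}\in\wltwoforms^1$ together with closed, compactly supported dual forms $\tau_k$ for which $\int_\Sigma\tau_j\wedge\eta_k=\int_{\beta_j}\eta_k$. Routing the auxiliary curves $\alpha_k$ so that $\beta_j$ meets $\mathrm{supp}\,\eta_k$ only when $j=k$, and normalizing $\int_{\beta_k}\eta_k=1$, makes $\big(\int_\Sigma\tau_j\wedge\eta_k\big)_{j,k}$ the identity. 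The point of using compactly supported $\tau_j$ is that $\omega\mapsto\int_\Sigma\tau_j\wedge\omega$ is a \emph{continuous} functional on $\wltwoforms^1$ (the Gaussian weight is bounded above and below on $\mathrm{supp}\,\tau_j$), and for $\phi\in\cptf(\Sigma)$ one has $\int_\Sigma\tau_j\wedge d\phi=-\int_\Sigma d(\phi\,\tau_j)=0$ by closedness of $\tau_j$ and Stokes; hence each such functional annihilates the closure $\overline{d\,\cptf(\Sigma)}$ inside $\wltwoforms^1$.

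\textbf{Step 3 (Gaussian Hodge projection) and Step 4 (independence).} Following McGonagle and Bueler's weighted Hodge theory I would invoke the $\wltwo$-orthogonal decomposition $\wltwoforms^1=\ghf^1\oplus\overline{d\,\cptf(\Sigma)}\oplus\overline{\hodged\,\cptf\Omega^2(\Sigma)}$. For the closed form $\eta_k$ and any $\psi\in\cptf\Omega^2$, $\langle\eta_k,\hodged\psi\rangle_\wltwo=\langle d\eta_k,\psi\rangle_\wltwo=0$, so $\eta_k$ is orthogonal to the third summand; thus its projection $h_k$ onto $\ghf^1$ satisfies $h_k-\eta_k\in\overline{d\,\cptf(\Sigma)}$. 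Concretely $h_k=\eta_k+df_k$, where $f_k$ solves $\hodged d f_k=-\hodged\eta_k$, so that $h_k$ is simultaneously closed and $\hodged$-closed, i.e. $h_k\in\wltwoforms^1\cap\ghf^1$. Since the functionals of Step 2 kill $\overline{d\,\cptf(\Sigma)}$, $\int_\Sigma\tau_j\wedge h_k=\int_\Sigma\tau_j\wedge\eta_k=\delta_{jk}$, whence any relation $\sum_k c_k h_k=0$ forces all $c_j=0$. Therefore $\dim(\wltwoforms^1\cap\ghf^1)\geq 2g+r-1$. I expect \textbf{Step 3} to be the main obstacle: it requires the weighted Hodge--Kodaira decomposition on the non-compact surface, equivalently the $\wltwo$-solvability of $\hodged d f_k=-\hodged\eta_k$ together with the bound $df_k\in\wltwoforms^1$. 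This rests on the self-adjointness and spectral properties of the drift Laplacian $\wslaplace$ on the Gaussian-weighted space, controlled near the conical ends by the a priori integrability of $\eta_k$ (and the stronger $|\eta_k|\in L^{2+\varepsilon}$ noted after Lemma \ref{closed forms construction}); this is precisely the analytic input of \cite{mcgonagle2015}, which adapts verbatim because our new end-forms have the same integrability as the compactly supported handle-forms already treated there.
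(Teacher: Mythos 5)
Your proposal is correct and follows essentially the same route as the paper: explicit closed $\wltwo$-integrable representatives built from Lemma \ref{closed forms construction} and Remark \ref{remark on closed forms}, orthogonal projection onto the harmonic summand $H$ of the weighted Hodge decomposition $\wltwoforms^1=A\oplus B\oplus H$, and linear independence detected by wedge-pairing against compactly supported closed forms, a pairing that annihilates the exact summand $A$ and hence survives the projection. The only (harmless) deviation is in the genus contribution: you take a full symplectic basis of $2g$ non-separating curves and use $2g$ closed Thom forms, whose mutual wedge pairings give the nondegenerate intersection form, whereas the paper uses $g$ curves $\delta_i$ together with the weighted Hodge duals $\nu_i^*=\whodge\nu_i$ (co-closed rather than closed) to reach $2g$; both work. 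Your flagged obstacle in Step 3 is also lighter than you fear: one does not need to solve $\hodged df_k=-\hodged\eta_k$, since the paper simply takes the abstract Hilbert-space projection of $\eta_k$ onto $H=(A\oplus B)^\perp$, writes $\eta_k-\theta_k$ as a limit of $df_n$ with $f_n\in\cptf(\Sigma)$ (not necessarily itself exact), passes to the limit in the pairing, and obtains smoothness of the projection afterwards from elliptic regularity.
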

\begin{proof}
Since $\Sigma$ has genus $g$ we may find $2g$ non-separating simple closed curves $\delta_1,\ldots,\delta_g$ and $\sigma_1,\ldots,\sigma_g$ such that each $\delta_i$ is pairwise disjoint, each $\sigma_i$ is pairwise disjoint and $\delta_i$ only intersects $\sigma_j$ when $i=j$ and the intersection is orthogonal.
We may now follow the construction of Lemma \ref{closed forms construction} and obtain smooth $1$-forms $\nu_i$ supported on a small neighbourhood of $\delta_i$ and $\tau_i$ supported on a small neighbourhood of $\sigma_i$, $i=1,\ldots,g$ satisfying:
\begin{itemize}
\item $d\nu_i=d\tau_i=0$, for all $i=1,\ldots,g$ and
\item $\int_{\Sigma}\tau_i\wedge\nu_j=\int_{\sigma_i}\nu_j=\delta_{ij}$ for all $i=1,\ldots,g$.
\end{itemize}

Now, for each asymptotically conical end $E_k$ we pick closed separating curves $\beta_k$, which we can assume are disjoint from the above constructed curves and we fix one of the ends $E_r$.
For each pair of ends $E_r,E_k$, $k=1,\ldots,r-1$ we apply the construction of Lemma \ref{closed forms construction} to obtain pairwise disjoint non-compact curves $\alpha_k$ that intersect $\beta_k$ othogonally and smooth $1$-forms $\eta_k$ supported on a neighbourhood of $\alpha_k$, $\gamma_k$ supported on a neighbourhood of $\beta_k$ for $k=1,\ldots,r-1$ such that:
\begin{itemize}
\item $\eta_k\in\wltwoforms^1(\Sigma)$ for all $k=1,\ldots,r-1$;
\item $d\eta_k=d\gamma_k=0$, for all $k=1,\ldots,r-1$ and
\item $\int_{\Sigma}\gamma_m\wedge\eta_k=\int_{\beta_m}\eta_k=\delta_{mk}$ for all $k=1,\ldots,r-1$.
\end{itemize}
Furthermore, since the $\sigma_i$ are pairwise disjoint and non-separating, we may assume that $\alpha_k\cap\sigma_i=\emptyset$ for all $i=1,\ldots,g$ and $k=1,\ldots,r-1$.

\begin{center}
\begin{tikzpicture}[scale=0.6]

\draw  plot[smooth, tension=.7] coordinates {(-2.6,0.1) (-2.6,1.7) (-1.5,2.3) (-0.2,1.8)};
\draw  plot[smooth, tension=.7] coordinates {(-0.2,1.8) (0.3,2.6) (0.4,3.2) (0.7,3.8)};
\draw  plot[smooth, tension=.7] coordinates {(0.4,1.1) (0.8,1.6) (1,2.3) (1.2,2.8) (1.8,3.2)};
\draw  plot[smooth, tension=.7] coordinates {(0.4,1.1) (0.6,0.7) (0.5,0.4)};
\draw  plot[smooth, tension=.7] coordinates {(0.5,0.4) (0.8,0.1) (1.3,-0.3) (1.6,-0.5) (2.2,-0.8)};
\draw  plot[smooth, tension=.7] coordinates {(1.6,-1.9) (1.2,-1.6) (0.7,-1) (0.3,-0.7) (-0.1,-0.3)};
\draw  plot[smooth, tension=.7] coordinates {(-0.1,-0.3) (-0.4,-0.7) (-1.1,-0.8) (-1.7,-0.5)};
\draw  plot[smooth, tension=.7] coordinates {(-1.7,-0.5) (-2.1,-0.8) (-2.4,-1.3) (-2.7,-1.9) (-3.2,-2.3)};
\draw  plot[smooth, tension=.7] coordinates {(-2.6,0.1) (-2.9,-0.3) (-3.2,-0.6) (-3.7,-0.8) (-4.2,-1)};

\draw[thick]  plot[smooth, tension=.7] coordinates {(-4,-2) (-3.6,-1.7) (-3,-1.2) (-2.5,-0.6) (-2.1,-0.2) (-1.6,0.1) (-1.1,-0.2) (-0.6,0) (-0.1,0) (0.2,-0.2) (0.7,-0.7) (1.2,-1) (1.6,-1.4) (1.9,-1.6)};
\node[above,left] at (-4,-2) {$\alpha_1$};
\draw[thick]  plot[smooth, tension=.7] coordinates {(2.3,-1.3) (1.7,-0.8) (1.2,-0.6) (0.7,-0.2) (0.3,0.1) (0.1,0.5) (0,0.9) (0.3,1.4) (0.5,1.9) (0.7,2.4) (1.1,2.9) (1.4,3.4)};
\node[above] at (1.4,3.4) {$\alpha_2$};
\draw[blue, thick]  plot[smooth, tension=.7] coordinates {(-2.4,-1.3) (-2.5,-0.9) (-2.7,-0.7) (-2.9,-0.6) (-3.2,-0.6)};
\draw[blue, dashed, thick]  plot[smooth, tension=.7] coordinates {(-2.4,-1.3) (-2.6,-1.4) (-2.8,-1.3) (-3.1,-1.2) (-3.2,-1) (-3.2,-0.8) (-3.2,-0.6)};
\node[right] at (-2.4,-1.3) {$\beta_1$};
\draw[blue, thick]  plot[smooth, tension=.7] coordinates {(0.3,2.6) (0.6,2.7) (0.8,2.6) (1,2.3)};
\draw[blue, dashed, thick]  plot[smooth, tension=.7] coordinates {(1,2.3) (0.7,2.1) (0.4,2.1) (0.3,2.3) (0.3,2.6)};
\node[right] at (1,2.3) {$\beta_2$};
\draw  plot[smooth, tension=.7] coordinates {(-1.7,1.6) (-1.6,1.5) (-1.6,1.2) (-1.6,0.9)};
\draw  plot[smooth, tension=.7] coordinates {(-1.7,1.5) (-1.8,1.4) (-1.8,1.2) (-1.7,1)};
\draw  plot[smooth, tension=.7] coordinates {(-0.7,1.5) (-0.6,1.3) (-0.6,1.1) (-0.7,0.9)};
\draw  plot[smooth, tension=.7] coordinates {(-0.7,1.4) (-0.8,1.3) (-0.8,1.1) (-0.7,1)};

\draw[thick]  plot[smooth, tension=.7] coordinates {(-1.8,1.4) (-1.9,1.6) (-2.2,1.7) (-2.4,1.7) (-2.6,1.7)};
\draw[dashed, thick]  plot[smooth, tension=.7] coordinates {(-2.6,1.7) (-2.5,1.5) (-2.4,1.3) (-2.2,1.3) (-1.9,1.3) (-1.8,1.4)};
\node[left] at (-2.6,1.7) {$\delta_1$};
\draw[thick]  plot[smooth, tension=.7] coordinates {(-0.8,1.3) (-0.9,1.5) (-1.1,1.8) (-1.1,2.1)};
\draw[dashed, thick]  plot[smooth, tension=.7] coordinates {(-1,2.2) (-0.9,2) (-0.8,1.8) (-0.8,1.5) (-0.8,1.3)};
\node[above] at (-1,2.2) {$\delta_2$};
\draw[red, thick]  plot[smooth, tension=.7] coordinates {(-1.9,1.8) (-2.2,1.5) (-2.1,1.2) (-2,0.9) (-1.5,0.7) (-1.2,0.9) (-1.3,1.4) (-1.5,1.7) (-1.9,1.8)};
\node[below] at (-1.5,0.7) {$\sigma_1$};
\draw[red, thick]  plot[smooth, tension=.7] coordinates {(-0.4,1.6) (-0.8,1.7) (-1,1.7) (-1,1.5) (-1,1) (-0.7,0.8) (-0.4,0.9) (-0.4,1.3) (-0.4,1.6)};
\node[below] at (-0.7,0.8) {$\sigma_2$};
\end{tikzpicture}
\end{center}

We also define $\nu^*_i=\whodge\nu_i$ for $i=1,\ldots,g$, which remains $\wltwo$-integrable because each $\nu_i$ has compact support.
Now have the set of closed $1$-forms $\{\nu_1,\ldots,\nu_g, \nu^*_1,\ldots,\nu^*_g,\eta_1,\ldots,\eta_{r-1}\}\subset\wltwoforms^1(\Sigma)$.

We will now show that each of these is cohomologous to a smooth Gaussian Harmonic $1$-form.
Let us follow $\cite{mcgonagle2015}$ (see also \cite{bueler1999}) and define $A\subset\wltwoforms^1(\Sigma)$ as the closure of $\{df:f\in\cptf(\Sigma)\}$, the space of exact $1$-forms, $B\subset\wltwoforms^1(\Sigma)$ as the closure of $\{\hodged\xi:\xi\in\Omega^2_c(\Sigma)\}$, the space of $\hodged$-exact $1$-forms and $H\subset\wltwoforms^1(\Sigma)$ as the orthogonal complement of $A\oplus B$, that is, the space of weakly Gaussian Harmonic forms.
We have the orthogonal decomposition $\wltwoforms^1(\Sigma)=A\oplus B\oplus H$.

Since $\nu_i$ and $\eta_k$ are closed, then $\nu_i,\eta_k\in A\oplus H$ for all $i=1,\ldots,g$ and $k=1,\ldots,r-1$.
Similarly, $\gauss\star\nu^*_i\in A\oplus H$ for all $i=1,\ldots,g$.

Finally, denote by $\omega_i$, $\omega_i^*$ and $\theta_k$ the projection of $\nu_i$, $\nu_i^*$ and $\eta_k$ onto $H$.
We will show that these forms are non-zero, linearly independent and smooth.

\begin{claim} Each $\omega_i$, $\omega_i^*$ and $\theta_k$ is non-zero.
\end{claim}
First, observe that for any $f\in\cptf(\Sigma)$ we have
\begin{equation*}
\int_\Sigma\tau_j\wedge(\nu_i+df)=\int_{\sigma_j}(\nu_i+df)=\delta_{ij}.
\end{equation*}
Since $\nu_i-\omega_i\in A$, that is, $\nu_i-\omega_i=\lim d f$. By taking the limit on the left hand side we can conclude that $\int_\Sigma\tau_j\wedge\omega_i=\delta_{ij}$.

Second, observe $\nu_i-\gauss\star\omega_i^*\in A$.
Similarly to the above we may show that $\int_\Sigma\gauss\star\tau_j\wedge\omega_i^*=\delta_{ij}$.

Again, note that $\eta_k-\theta_k\in A$ and
\begin{equation*}
\int_\Sigma\gamma_m\wedge(\eta_k+df)=\int_{\beta_m}(\eta_k+df)=\delta_{mk}
\end{equation*}
for any $f\in\cptf(\Sigma)$.
By taking the limit we conclude that $\int_\Sigma\gamma_m\wedge\theta_k=\delta_{mk}$, which proves the claim.

\begin{claim} The $1$-forms $\omega_i$, $\omega_i^*$ and $\theta_k$ are all linearly independent.
\end{claim}
It is a straightforward application of the divergence theorem to show that $\nu_i+df$ and $\nu_j^*+\hodged\xi$ are pairwise orthogonal in $\wltwoforms^1(\Sigma)$ for any $f$ and $\xi$.
We simply observe that $\nu_i$, $\nu_i^*$ are always orthogonal and $\nu_i$, $\nu_j^*$  have disjoint compact support when $i\neq j$.
It then follows by taking the limit that $\omega_i$ and $\omega_i^*$ are also orthogonal.

Now, following the same computations as in the previous claim we can show that $\int_\Sigma\gamma_k\wedge\omega_i=\int_\Sigma\gamma_k\wedge\omega_i^*=0$ for all $i,k$.
Recall that $\gamma_k$ is supported in a neighbourhood of $\beta_k$ and $\nu_i,\nu_i^*$ are supported in a neighbourhood of $\sigma_i$, which are disjoint curves.

Suppose there exists $a_i, a_i^*,b_k$ such that
\begin{equation*}
\sum_{i=1}^g a_i\omega_i+a_i^*\omega_i^*+\sum_{k=1}^{r-1}b_k\theta_k=0.
\end{equation*}
By taking the integral $\int_\Sigma\gamma_k\wedge\cdot$ we have that $b_k=0$ for all $k=1,\ldots,r-1$.
That is, $\sum_{i=1}^g a_i\omega_i+a_i^*\omega_i^*=0$.
However, $\omega_i,\omega_j^*$ are pairwise orthogonal, so $a_i=a_i^*=0$.
This proves the claim.

It remains to show that $\omega_i$, $\omega_i^*$ and $\theta_k$ are all smooth $1$-forms.
We simply observe that, in local coordinates they satisfy a weakly linear elliptic system of PDE's so regularity follows from classical theory (see for example \cite{douglis-nirenberg1955}).

This concludes the proof since $\{\omega_i,\omega_i^*,\theta_k\}\subset\ghf^1(\Sigma)$ is a set of $2g+r-1$ linearly independent $\wltwo$-integrable GHF's.
\end{proof}

\begin{remark}
It is not clear how to improve the dimension to $2(g+r-1)$ as in the minimal surface case \cite{chodosh-maximo2016} since $\whodge\eta_i$ are no longer $\wltwo$-integrable as constructed above.
\end{remark}

\section{Index Estimates}
In this section we use the calculations by Mcgonagle in \cite{mcgonagle2015} together with our previous results to improve the index estimates of self-shrinkers when the ends are asymptotically conical.
Given a $1$-form $\omega$ in $\Sigma$ and a vector $v\in\real^3$ we will denote by $\omega^\#$ the unique vector field defined by $\omega(X)=g_\Sigma(\omega^\#,X)$ and $v^\flat$ the $1$-form on $\real^3$ defined by $v^\flat(X)=\langle v,X\rangle$.

The following computation is carried out in \cite{mcgonagle2015} and we refer the reader to the original article.

\begin{lemma}[{\cite[Corollary 1.1, Lemma 2.2]{mcgonagle2015}}]\label{lemma computation}
Let $\omega$ be a Gaussian Harmonic $1$-form on a self-shrinking surface $\Sigma^2\subset\real^3$, $v\in\real^3$ a fixed vector.
Then, 
\begin{equation*}
\wslaplace\omega=\frac{\omega}{2}-A^2(\omega^\#,\cdot),
\end{equation*}
where $A^2=\langle\rnabla_\cdot N_\Sigma, \rnabla_\cdot N_\Sigma\rangle$.
And,
\begin{equation*}
\wslaplace\langle \omega, v^\flat\rangle=\frac{\langle\omega,v^\flat\rangle}{2}-2A^2(\omega^\#,v^\top)-2\langle N_\Sigma,v \rangle\langle\snabla\omega,A\rangle.
\end{equation*}
\end{lemma}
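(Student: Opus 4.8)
The plan is to obtain both identities from a weighted Weitzenb\"ock (Bochner) formula, using the defining property of a GHF to eliminate the Hodge Laplacian. Write $f=\frac{|x|^2}{4}$ for the Gaussian weight, so that $\hodged=d^*-\tfrac12\iota_{x^\top}$ and the associated Hodge Laplacian is $d\hodged+\hodged d$. On $1$-forms the Bakry--\'Emery version of the Bochner identity reads $(d\hodged+\hodged d)\omega=-\wslaplace\omega+\ricci_\mu(\omega^\#,\cdot)$, where $\ricci_\mu=\ricci_\Sigma+\textup{Hess}_\Sigma f$ is the drift (Bakry--\'Emery) Ricci tensor and $\wslaplace$ now denotes the weighted rough Laplacian $\trace\snabla^2-\snabla_{x^\top/2}$ acting on forms. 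Since $\omega$ is a GHF, $(d\hodged+\hodged d)\omega=0$, so the entire problem collapses to the pointwise tensor identity $\wslaplace\omega=\ricci_\mu(\omega^\#,\cdot)$.

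Computing $\ricci_\mu$ is where the self-shrinker structure enters. On the surface $\Sigma^2$ one has $\ricci_\Sigma=K g_\Sigma$ with $K=\det S$ the Gauss curvature, $S$ the shape operator and $A(X,Y)=g_\Sigma(SX,Y)$, so that $A^2(X,Y)=g_\Sigma(SX,SY)=\langle\rnabla_X\normalvec,\rnabla_Y\normalvec\rangle$. For the Hessian I would differentiate $\snabla f=\tfrac12 x^\top$ using $\rnabla_X x=X$ and the Weingarten relation to get $\textup{Hess}_\Sigma f(X,Y)=\tfrac12 g_\Sigma(X,Y)+\tfrac12\langle x,\normalvec\rangle A(X,Y)$; the self-shrinker equation $\meanvec+\tfrac12 x^\perp=0$ identifies $\tfrac12\langle x,\normalvec\rangle$ with $\meancurv$, giving $\ricci_\mu=K g_\Sigma+\tfrac12 g_\Sigma+\meancurv A$. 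Contracting with $\omega^\#$ and applying the Cayley--Hamilton relation $A^2-(\trace S)A+K g_\Sigma=0$ for the $2\times2$ operator $S$ collapses the $K g_\Sigma+\meancurv A$ part into $-A^2$, which yields the first identity $\wslaplace\omega=\tfrac12\omega-A^2(\omega^\#,\cdot)$.

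For the second identity I would regard $\langle\omega,v^\flat\rangle$ as the pointwise inner product of the two $1$-forms $\omega$ and the restriction of $v^\flat$, and apply the weighted product rule $\wslaplace\langle\omega,v^\flat\rangle=\langle\wslaplace\omega,v^\flat\rangle+2\langle\snabla\omega,\snabla v^\flat\rangle+\langle\omega,\wslaplace v^\flat\rangle$, valid because the first-order drift distributes without a cross term. The first summand is handled by the identity just established and contributes $\tfrac12\langle\omega,v^\flat\rangle-A^2(\omega^\#,v^\top)$. The two new ingredients both come from the constancy of $v$ in $\real^3$: one differentiation gives $\snabla v^\flat=\langle\normalvec,v\rangle A$, so the cross term is a multiple of $\langle\normalvec,v\rangle\langle\snabla\omega,A\rangle$; a second differentiation, together with the Codazzi identity $\divergence_\Sigma A=d(\trace S)$, gives $\wslaplace v^\flat=-A^2(v^\top,\cdot)+\langle\normalvec,v\rangle\,d(\trace S)-\tfrac12\langle\normalvec,v\rangle A(x^\top,\cdot)$. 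The resulting second copy of $A^2$ combines with the one from the first summand to produce the $-2A^2(\omega^\#,v^\top)$ term.

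The main obstacle is the cancellation of the two leftover scalar terms in $\wslaplace v^\flat$. Here the self-shrinker equation is used a second time, now in the differentiated form $d\meancurv=\pm\tfrac12 A(x^\top,\cdot)$ obtained from $\meancurv=\tfrac12\langle x,\normalvec\rangle$, which exactly annihilates the drift contribution $-\tfrac12\langle\normalvec,v\rangle A(x^\top,\cdot)$ against $\langle\normalvec,v\rangle\,d(\trace S)$. Assembling the surviving pieces gives the stated formula. Throughout, the genuinely delicate point is the bookkeeping of signs and of the drift terms: one must fix a single convention for $\normalvec$, $S$ and $\meancurv$ (the self-shrinker equation pins down $\langle x,\normalvec\rangle=2\meancurv$), and then keep the Bakry--\'Emery drift terms consistent in both the Weitzenb\"ock identity and the product rule, since the classical tensor identities (Gauss, Weingarten, Codazzi, Cayley--Hamilton) enter only in the precise combinations indicated above.
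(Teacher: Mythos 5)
The paper does not actually prove this lemma --- it defers entirely to McGonagle's article --- and your weighted Weitzenb\"ock derivation (Bakry--\'Emery Bochner formula plus the GHF condition for the first identity; drift product rule, $\snabla v^\flat=\langle\normalvec,v\rangle A$, Codazzi, and the differentiated shrinker equation for the second) is precisely the route taken in that cited source, so your proposal is correct and matches the intended proof. The one place where your bookkeeping as literally written needs care is the one you yourself flag: the collapse $Kg_\Sigma+\meancurv A=-A^2$ via Cayley--Hamilton requires the convention $\meancurv=\divergence_\Sigma\normalvec=-\trace_{g_\Sigma}A$, so that $\meancurv A=-(A^2+Kg_\Sigma)$; with $\meancurv=\trace_{g_\Sigma}A$ the same combination would instead be $A^2+2Kg_\Sigma$, and likewise the sign of the $\langle\normalvec,v\rangle\langle\snabla\omega,A\rangle$ term flips with the choice of $A$ (harmlessly for the application, since that term vanishes after summing over the coordinate directions in the index estimate).
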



The next result is an extension of Theorem $4.1$ by Mcgonagle on \cite{mcgonagle2015}.
The arguments are exactly the same except for the fact that we are able to construct more GHF's.
We include the proof for completeness.

\begin{theorem}\label{indexestimate}
Let $\Sigma$ be a properly immersed orientable self-shrinking surface in $\real^3$ with genus $g$ and $r$ asymptotically conical ends.
If the principal curvatures satisfy $|\kappa_1^2-\kappa_2^2|\leq c <1$ then, 
\begin{equation*}
\index(\Sigma)\geq\frac{2g+r-1}{3}.
\end{equation*}
\end{theorem}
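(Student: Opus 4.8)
The plan is to leverage the $2g+r-1$ linearly independent Gaussian Harmonic $1$-forms produced by Theorem~\ref{dimension} and feed their coordinate functions into the weighted second variation form $Q^\Sigma$, exactly in the spirit of Mcgonagle's argument. Given a GHF $\omega \in \wltwoforms^1 \cap \ghf^1$, each of the three Euclidean coordinates $\langle \omega, e_j^\flat\rangle$, $j=1,2,3$, is an $\wltwo$-function on $\Sigma$, so together the forms supply $3(2g+r-1)$ candidate test-functions. First I would use Lemma~\ref{lemma computation} to compute $Q^\Sigma$ applied to these coordinate functions: since $\wslaplace\langle\omega,v^\flat\rangle$ is expressed through $A^2$ and the second fundamental form, a direct substitution into $Q^\Sigma(u,u) = -\int_\Sigma u\,\wsjacobi u\,\dgauss$ should, after summing over $j=1,2,3$ and using the harmonicity relation $\wslaplace\omega = \tfrac{\omega}{2} - A^2(\omega^\#,\cdot)$, reduce the total quadratic form to an integral whose integrand is controlled by the principal curvatures.

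\medskip

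The key estimate is where the hypothesis $|\kappa_1^2 - \kappa_2^2| \le c < 1$ enters. The plan is to show that, after summing the three coordinate test-functions attached to a single harmonic form $\omega$, the combined contribution $\sum_{j=1}^3 Q^\Sigma(\langle\omega,e_j^\flat\rangle, \langle\omega,e_j^\flat\rangle)$ is strictly negative whenever $\omega \not\equiv 0$; the pinching $c<1$ is precisely what guarantees the sign of the curvature term $A^2$ acting against the $\tfrac{1}{2}$ eigenvalue shift does not spoil negativity. This produces a $3$-dimensional space of negative directions for $Q^\Sigma$ per harmonic form, but these directions are shared across forms, so a counting/pigeonhole argument is needed: a vector space of GHF's of dimension $N=2g+r-1$ yields at most a factor-of-$3$ loss, giving $\index(\Sigma)\ge N/3 = (2g+r-1)/3$. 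Concretely, I would invoke Proposition~\ref{fischercolbrie}: if $\index(\Sigma)=k$, then any $\phi\in\cptf(\Sigma)$ orthogonal to the $k$ negative eigenfunctions satisfies $Q^\Sigma(\phi,\phi)\ge 0$; since our coordinate functions are genuinely negative directions, the span of the $N$ harmonic forms cannot inject into a quotient where $Q^\Sigma$ is positive once we have stripped off at most $3k$ coordinate constraints, forcing $N \le 3k$.

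\medskip

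\textbf{The main obstacle} I anticipate is twofold. The analytic difficulty is that the coordinate functions $\langle\omega,e_j^\flat\rangle$ lie in $\wltwo(\Sigma)$ but need not have compact support, whereas Proposition~\ref{fischercolbrie} and the definition of index are phrased for $\cptf(\Sigma)$; one must therefore justify an approximation/density argument showing these $\wltwo$-integrable test-functions may be used to detect negative directions, which is exactly where the finite-weighted-norm control from Theorem~\ref{dimension} and the integrability tail estimate in Lemma~\ref{closed forms construction} do their work. The combinatorial difficulty is the counting step: the three coordinate functions of a single form are not independent as negative directions, and showing that an $N$-dimensional space of forms produces a subspace of negative directions of dimension at least $\lceil N/3\rceil$ requires carefully setting up the linear-algebra pigeonhole, namely that the map from harmonic forms to the three coordinate test-functions, composed with projection onto the span of negative eigenfunctions, has rank at least $N-3\,\nullity$, so that if $Q^\Sigma$ had fewer than $N/3$ negative eigenvalues some nontrivial combination of the $\omega$'s would map to a positive direction, contradicting the strict negativity established in the second step.
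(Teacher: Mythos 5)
Your proposal is correct and follows essentially the same route as the paper: cut off the coordinate functions $\phi\,\omega^j$ of the $2g+r-1$ Gaussian harmonic forms, use the linear map $\phi\omega\mapsto(\int_\Sigma\phi\,\omega^j f_i)\in\real^{3I}$ together with Proposition~\ref{fischercolbrie} to force a nonzero form in its kernel if $3I<2g+r-1$, and then derive a contradiction by summing $Q^\Sigma(\phi\omega^j,\phi\omega^j)$ over $j$ via Lemma~\ref{lemma computation}, where the pinching $|\kappa_1^2-\kappa_2^2|\leq c<1$ and the cut-off error $\tfrac{2}{R^2}\int_\Sigma|\omega|^2\dgauss$ yield strict negativity for $R$ large. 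Both the analytic issue (non-compact support, handled by the cut-off and the $\wltwo$-integrability) and the combinatorial counting ($N\leq 3I$ via the kernel of the map into $\real^{3I}$) are resolved exactly as in the paper.
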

\begin{proof}
We may assume that $\index(\Sigma)=I$ is finite so Proposition \ref{fischercolbrie} applies to produce $f_1,\ldots,f_I\in\wltwo(\Sigma)$ such that $Q^\Sigma(u,u)\geq 0$ whenever $u\in\cptf(\Sigma)$ is $\wltwo$-orthogonal to $f_i$, for all $i=1,\ldots,I$.

It follows from Theorem \ref{dimension} that there exists a $(2g+r-1)$-dimensional space $V\subset\wltwoforms^1\cap\ghf^1$ of Gaussian Harmonic $1$-forms.
Now, let $\phi$ be a cut-off function in $\real^n$ such that $\phi=1$ on $B_R$, $\phi=0$ on $\real^n\setminus B_{2R}$ and $|\rnabla\phi|^2\leq\frac{2}{R^2}$.
If $R$ is sufficiently large, have that $dim(V_\phi)=dim(V)$, where $V_\phi=\{\phi\omega:\omega\in V\}$.
We want to show that $3I\geq dim(V_\phi)$

Let $F:V_\phi\rightarrow\real^{3I}$ be given by $F(\phi\omega)=(\int_\Sigma \phi\omega^j f_i)_{i=1,\ldots,I}^{j=1,2,3}$, where $\omega^j=\langle\omega,dx^j\rangle$ are the coordinates of $\omega$ in $\real^3$.
By contradiction suppose that $3I<dim(V_\phi)$, then there exists $\omega\in V$ such that $F(\phi\omega)=0$.
That is, $\phi\omega^j$ is $\wltwo$-orthogonal to $f_i$ for all $i=1,\ldots,I$ and $j=1,2,3$ so, 
\begin{equation*}
-\int_\Sigma\phi\omega^j\wsjacobi(\phi\omega^j)\dgauss\geq 0.
\end{equation*}

On the other hand, since $\omega$ is a GHF, we use Lemma \ref{lemma computation} for each $j=1,2,3$ to compute:
\begin{equation*}
\begin{aligned}
   -\int_\Sigma\phi\omega^j\wsjacobi&(\phi\omega^j)\dgauss  = -\int_\Sigma\phi^2\omega^j\wsjacobi\omega^j\dgauss + \int_\Sigma|\snabla\phi|^2(\omega^j)^2\dgauss\\
                                                          & = -\int_\Sigma\phi^2(\omega^j)^2[1+(\kappa_1^2+\kappa_2^2)]\dgauss \\ & +\int_\Sigma [2A^2(\omega^\#,\omega^je_j^\top)+2\langle N_\Sigma,\omega^je_j \rangle\langle\snabla\omega,A\rangle+|\snabla\phi|^2(\omega^j)^2]\dgauss.
\end{aligned}
\end{equation*}
Adding it for $j=1,2,3$ we obtain
\begin{equation*}
\begin{aligned}
-\sum_{j=1}^3\int_\Sigma\phi\omega^j\wsjacobi(\phi\omega^j)\dgauss & = \int_\Sigma|\snabla\phi|^2|\omega|^2\dgauss-\int_\Sigma\phi^2|\omega|^2+\sum_{i\neq j}(\kappa_j^2-\kappa_i^2)(\omega^i)^2\dgauss\\
                                                                   & \leq\int_\Sigma|\snabla\phi|^2|\omega|^2\dgauss-\int_\Sigma\phi^2|\omega|^2(1-|\kappa_1^2-\kappa_2^2|)\dgauss\\
                                                                   & \leq \int_\Sigma|\snabla\phi|^2|\omega|^2\dgauss-(1-c)\int_\Sigma\phi^2|\omega|^2\dgauss\\
                                                                   & \leq \left(\frac{2}{R^2}-(1-c)\right)\int_\Sigma|\omega|^2\dgauss
\end{aligned}
\end{equation*}
By taking $R>0$ sufficiently large we conclude that $-\sum_{j=1}^3\int_\Sigma\phi\omega^j\wsjacobi(\phi\omega^j)\dgauss<0$ which is a contradiction.
We conclude that $3I\geq dim(V_\phi)=2g+r-1$.
\end{proof}

We would like to conclude this section by improving the above thanks to the recently published result by Impera-Rimoldi-Savo \cite{impera-rimoldi-savo2018:arxiv}.
The following is a direct application of \cite[Theorem C]{impera-rimoldi-savo2018:arxiv} and Theorem \ref{dimension} in the previous section.

\begin{corollary}
Let $\Sigma$ be a properly immersed orientable self-shrinking surface in $\real^3$ with genus $g$ and $r$ asymptotically conical ends.
Then,
\begin{equation*}
\index(\Sigma)\geq\frac{2g+r-1}{3}+1.
\end{equation*}
\end{corollary}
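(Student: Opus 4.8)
The plan is to combine the dimension count of Theorem~\ref{dimension} with the index inequality of Impera--Rimoldi--Savo, thereby replacing the restrictive hypothesis $|\kappa_1^2-\kappa_2^2|\leq c<1$ of Theorem~\ref{indexestimate} by their curvature-free estimate and gaining the additional additive unit. Since all of the analytic difficulty is packaged into the two cited results, the argument should reduce to a substitution.

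First I would apply Theorem~\ref{dimension} to obtain a space $V\subset\wltwoforms^1\cap\ghf^1$ of $\wltwo$-integrable Gaussian Harmonic $1$-forms with $\dim V\geq 2g+r-1$; this is the quantity in which their lower bound is phrased. Next I would invoke \cite[Theorem C]{impera-rimoldi-savo2018:arxiv}, which bounds the Morse index of an $f$-minimal hypersurface from below by one third of the dimension of its space of weighted $L^2$ harmonic $1$-forms together with an additive constant $1$ reflecting the intrinsic instability of the self-shrinker (recall every self-shrinker has index at least $1$), and which requires no pointwise control on the principal curvatures. Reading their estimate as $\index(\Sigma)\geq\frac{1}{3}\dim(\wltwoforms^1\cap\ghf^1)+1$ and feeding in $\dim(\wltwoforms^1\cap\ghf^1)\geq 2g+r-1$ immediately produces $\index(\Sigma)\geq\frac{2g+r-1}{3}+1$.

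The hard part will not be the arithmetic but checking that the two frameworks are genuinely compatible. Specifically, I must confirm that the weighted harmonic $1$-forms constructed in Theorem~\ref{dimension}, which detect the asymptotically conical ends, are admissible in the Impera--Rimoldi--Savo argument, and that the Gaussian weight $\gauss$ together with conical ends supplies exactly the decay and integrability that their proof of Theorem C presupposes. As remarked in the introduction, the appropriate asymptotic structure at infinity for a general $f$-weighted functional is not well understood, so the one step that needs verification is that self-shrinkers in $\real^3$ with asymptotically conical ends lie within the hypotheses of their theorem; once that is secured, the stated bound follows at once.
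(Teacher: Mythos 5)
Your proposal matches the paper's own argument: the corollary is stated there as a direct application of Theorem~\ref{dimension} combined with Theorem C of Impera--Rimoldi--Savo, exactly the substitution you describe. Your closing caveat about verifying that Gaussian-weighted self-shrinkers fall within the $f$-minimal framework of their theorem is a reasonable point of care, but the paper treats this as immediate and offers no further justification.
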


\bibliographystyle{plain}
\bibliography{bibliography}

\end{document}